\documentclass[12pt]{article}
\usepackage{amsmath, amssymb, amsthm}
\usepackage{mathrsfs, lineno}
\usepackage{geometry, hyperref, authblk,xcolor}
\usepackage{booktabs}
\newtheorem{theorem}{Theorem}[section]
\newtheorem{lemma}[theorem]{Lemma}
\newtheorem{proposition}[theorem]{Proposition}
\newtheorem{corollary}[theorem]{Corollary}
\newtheorem{definition}[theorem]{Definition}
\newtheorem{remark}[theorem]{Remark}

\newcommand{\N}{\mathbb{N}}

\newcommand{\Q}{\mathbb{Q}}

\newcommand{\I}{I_N}
\newcommand{\Ha}{\mathcal{H}}

\numberwithin{equation}{section}

\DeclareMathOperator{\dimH}{dim_H}
\title{Sharp Hausdorff Dimension Bounds for Sets with Bounded and Growing Digits in $N$-expansions}
\author[1]{Andreea Catalina Chitu}
\affil[1]{Faculty of Applied Sciences, National University of Science and Technology POLITEHNICA Bucharest, Splaiul Independentei 313, 060042 Bucharest, Romania\\ 
e-mail: \href{mailto: andreea.catalina.chitu01@gmail.com}{andreea.catalina.chitu01@gmail.com}}
    
\author[2,3]{Gabriela Ileana Sebe}
\affil[2]{Faculty of Applied Sciences, National University of Science and Technology POLITEHNICA Bucharest, Splaiul Independentei 313, 060042 Bucharest, Romania }
\affil[3]{Gheorghe Mihoc-Caius Iacob Institute of Mathematical Statistics and Applied Mathematics of the Romanian Academy, Calea 13 Sept. 13, 050711 Bucharest, Romania\\e-mail: \href{mailto: igsebe@yahoo.com}{igsebe@yahoo.com}}

\author[4]{Dan Lascu}
\affil[4]{Romanian Naval Academy ``Mircea cel Batran", 1 Fulgerului, 900218 Constanta, Romania\\
e-mail: \href{mailto: lascudan@gmail.com}{lascudan@gmail.com}}
\date{\today}

\begin{document}

\maketitle

\begin{abstract}
{We establish sharp bounds for the Hausdorff dimension of sets of irrational numbers in $(0,1)$ whose digits in the $N$-expansion are either uniformly bounded or tend to infinity. 
For sets with digits bounded by an integer $M \ge N$, we obtain improved Jarn\'ik-type bounds that generalize and refine classical results for regular continued fractions, with explicit dependence on $N$ (Theorem~1.1). 
For sets with digits that grow without bound, we obtain precise asymptotics that extend Good's theorems to $N$-expansions, proving in particular that the set of numbers whose digits tend to infinity has Hausdorff dimension exactly $1/2$, and that the dimension of sets with uniformly large digits approaches $1/2$ as the lower bound increases, with explicit logarithmic decay (Theorem~1.2). 
The results reveal how the parameter $N$ influences the dimensional properties of these exceptional sets. 
Our methods combine careful estimates of fundamental interval lengths with optimized covering arguments for upper bounds and Cantor set constructions via mass distribution principles for lower bounds.}
\end{abstract}

\section{Introduction}

The metrical theory of continued fractions and their generalizations represents a rich intersection of number theory, ergodic theory, and fractal geometry. The pioneering work of Jarn\'ik \cite{Jarnik1928} established fundamental connections between Hausdorff dimension (see, e.g., \cite{Falconer2004}) and Diophantine approximation, showing that for regular continued fractions, the set $E_M$ of irrationals with partial quotients bounded by $M$ satisfies
\[
1 - \frac{4}{M\log 2} \leq \dimH(E_M) \leq 1 - \frac{1}{8M\log M}, \quad M > 8.
\]
{Shortly thereafter, Good \cite{Good1941} initiated the systematic study of the Hausdorff dimension of sets defined by restrictions on partial quotients in regular continued fractions. Among his fundamental results, he proved that the set of numbers whose partial quotients tend to infinity,
\[
F := \{x \in (0,1) \setminus \mathbb{Q} : a_n(x) \to \infty \text{ as } n \to \infty\},
\]
has Hausdorff dimension $\dim_H(F) = \frac{1}{2}$. For sets with uniformly large digits,
\[
F_\alpha := \{x \in (0,1) \setminus \mathbb{Q} : a_n(x) \ge \alpha \text{ for all } n \ge 1\},
\]
he obtained, for $\alpha \ge 20$, the bounds
\[
\frac{1}{2} + \frac{1}{2\log(\alpha+2)} \;<\; \dim_H(F_\alpha) \;<\; \frac{1}{2} + \frac{\log\log(\alpha-1)}{2\log(\alpha-1)},
\]
showing in particular that $\dim_H(F_\alpha) \to \frac{1}{2}$ as $\alpha \to \infty$.}  These foundational results have inspired numerous extensions and refinements across various continued fraction algorithms, including recent investigations into $\theta$-expansions \cite{SebeLascuBilel1, SebeLascuBilel2}.

In this work, we focus on $N$-expansions, a natural generalization introduced by Burger et al. \cite{Burger2008} and subsequently developed in \cite{Lascu2016, Lascu2017, SebeLascu2020}. For any fixed integer $N \geq 1$, every irrational number $x \in (0,1)$ admits a unique expansion of the form
\[
x = \frac{N}{\varepsilon_1 + \dfrac{N}{\varepsilon_2 + \dfrac{N}{\varepsilon_3 + \ddots}}} =: [\varepsilon_1, \varepsilon_2, \varepsilon_3, \ldots]_N,
\]
where the digits $\varepsilon_n$ belong to $\mathbb{N}_N := \{N, N+1, N+2, \ldots\}$. This expansion generalizes both the regular continued fraction (case $N=1$) and provides a family of intermediate algorithms {whose metric properties - such as the Gauss–Kuzmin distribution of digits and the exponential decay rate of fundamental intervals - depend continuously on $N$, thereby interpolating between the classical case and other regimes.}

Our investigation yields two principal sets of results. First, we establish sharp dimension bounds for sets with bounded digits, providing improved Jarn\'ik-type inequalities with explicit dependence on the parameter $N$. Second, we extend Good's classical theorems \cite{Good1941} to $N$-expansions, obtaining precise asymptotics for sets where digits grow without bound.

We study two natural families of sets defined by restrictions on the digits of $N$-expansions.

For any integer $M \geq N$, define the set of numbers with digits bounded by $M$:
\begin{equation}
E_M := \{ x \in (0,1) \setminus \Q : N \leq \varepsilon_n(x) \leq M \text{ for all } n \geq 1 \}.  \label{def:EM}
\end{equation}

Our first set of results provides generalized Jarn\'ik-type theorems for $N$-expansions, giving explicit bounds for the Hausdorff dimension of sets with bounded digits.

\begin{theorem}[Generalized Jarn\'ik-type Theorem for Bounded Digits] \label{thm:main-bounded}
Let $N \geq 1$ be an integer. For any integer $M > 2N+1$, the Hausdorff dimension of the set $E_M$
satisfies:
\[
 1 - \frac{2(N+1)}{M+1}\frac{1}{\log(N+1)}
 \leq 
 \dimH(E_M) 
 \leq 
 1 - \frac{N}{(M + 1)\log \left( \frac{(M+1)^2}{N} \right)}.
\]
\end{theorem}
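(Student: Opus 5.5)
The proof rests on the standard structure of $N$-expansions. The cylinder of rank $n$ is
\[
I(\varepsilon_1,\dots,\varepsilon_n)=\{x:\varepsilon_k(x)=\varepsilon_k,\ 1\le k\le n\}.
\]
Its endpoints are $p_n/q_n$ and $(p_n+Np_{n-1})/(q_n+Nq_{n-1})$, where $q_n=\varepsilon_n q_{n-1}+Nq_{n-2}$ with $q_0=1$ and $q_{-1}=0$. Its length is therefore
\[
|I(\varepsilon_1,\dots,\varepsilon_n)|=\frac{N^n}{q_n(q_n+Nq_{n-1})}.
\]
The first step is to record two-sided estimates for ratios of lengths. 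Appending a digit $k$ multiplies the length by a factor comparable to $N/k^2$, with explicit constants. Since $q_{n-1}/q_n\in(0,1/N]$ once $\varepsilon_n\ge N$, the factor lies between
\[
\frac{N}{(k+1)^2}\quad\text{and}\quad\frac{N\,(1+1)}{k^2}\ \text{(roughly)}.
\]
These constants are where $N+1$ and $(M+1)^2/N$ will enter.

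\textbf{Upper bound.} The plan is to cover $E_M$ by the rank-$n$ cylinders with all digits in $\{N,\dots,M\}$, or more efficiently by the closed intervals
\[
J(\varepsilon_1,\dots,\varepsilon_n)=\bigcup_{N\le k\le M} I(\varepsilon_1,\dots,\varepsilon_n,k),
\]
which remove the part corresponding to digits $>M$. By the usual sub-additivity argument it suffices to find $s$ with
\[
\sum_{k=N}^{M}|J(\varepsilon_1,\dots,\varepsilon_n,k)|^s\le |J(\varepsilon_1,\dots,\varepsilon_n)|^s
\]
uniformly in the prefix. Iterating this gives $\mathcal H^s(E_M)<\infty$, and hence $\dimH E_M\le s$.

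The ratio $|J(\dots,k)|/|J(\dots)|$ is bounded by $C_N/k^2$-type quantities. The proportion of $J$ removed, coming from digits $>M$, has relative length at least about $N/(M+1)$. Write $s=1-\delta$ and use
\[
x^{1-\delta}\le x\,(1+\delta\log(1/x)),
\]
with $\log(1/x)\le\log((M+1)^2/N)$ for the smallest pieces. The required inequality then reduces to
\[
\Bigl(1-\tfrac{N}{M+1}\Bigr)\Bigl(1+\delta\log\tfrac{(M+1)^2}{N}\Bigr)\le 1,
\]
which holds for $\delta=\frac{N}{(M+1)\log((M+1)^2/N)}$ after an elementary inequality. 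The hypothesis $M>2N+1$ is used here to keep the error terms controlled.

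\textbf{Lower bound.} The plan is to apply the mass distribution principle to the natural measure on $E_M$. This measure gives each child cylinder mass proportional to its length within $J$, i.e. a Gibbs-like measure equivalent to $\mu$ restricted and renormalised at each level. Lengths of rank-$n$ cylinders are comparable uniformly, with distortion bounded by a constant depending on $N$ through $(1+1/N)$-type factors. Consequently
\[
\mu(I_n)\le \prod_{j\le n}\bigl(1-\eta_j\bigr)^{-1}|I_n|,
\]
where $\eta_j\le \frac{2N}{M+1}$ is the removed proportion. Meanwhile $|I_n|\le (N+1)^{-n}$-type decay holds, since each digit contracts by at least a factor $1/(N+1)$ in the $q_n$ recursion after two steps. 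It follows that
\[
\mu(I_n)\le |I_n|^{1-\frac{-\log(1-2N/(M+1))}{\log(N+1)}}\lesssim |I_n|^{1-\frac{2(N+1)}{(M+1)\log(N+1)}}.
\]
Passing from cylinders to arbitrary balls is done in the standard way: a ball of radius $r$ meets boundedly many rank-$n$ cylinders of comparable length, by bounded digits and the gaps between the $J$'s. This gives $\mu(B(x,r))\lesssim r^{s}$ with $s=1-\frac{2(N+1)}{(M+1)\log(N+1)}$.

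\textbf{Main obstacle.} The hardest step will be getting the explicit constants exactly as stated: the removed-proportion estimate with the factor $N$, and the minimal contraction rate $\log(N+1)$ per digit. The latter is delicate because one-step contraction of $q_n$ can be as weak as $\varepsilon_n\ge N$. I would first try a two-step bound,
\[
q_{n+2}\ge (N^2+N)\,q_n,
\]
and absorb the square root into the factor $2$.
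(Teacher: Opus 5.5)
Your architecture matches the paper's: cover $E_M$ by rank-$n$ cylinders and compare the children's $s$-sum with the parent for the upper bound, and build a measure with $\mu(I_n)\le|I_n|^s$ plus mass distribution for the lower bound. The explicit constants, however, do not follow from what you wrote, and in the lower bound the argument genuinely falls short.

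\textbf{The length formula and the lower bound.} Start with the length formula. The second endpoint of $I_n$ is $\frac{p_n+p_{n-1}}{q_n+q_{n-1}}$ (let $T_N^n x\to1$), so $|I_n|=\frac{N^n}{q_n(q_n+q_{n-1})}$. Already for $n=1$ your formula gives $\frac{N}{k(k+N)}$ instead of $\frac{N}{k(k+1)}$, the length of $(N/(k+1),N/k]$. With the correct formula and $r=q_{n-2}/q_{n-1}\in[0,1/N]$, two facts follow. The child/parent ratio for last digit $k$ is $\frac{N(1+r)}{(k+Nr)(k+1+Nr)}$. The proportion of $I_{n-1}$ lost to digits $>M$ telescopes to exactly $\frac{N(1+r)}{M+1+Nr}\in\bigl[\frac{N}{M+1},\frac{N+1}{M+1}\bigr]$.

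Your bound $\eta_j\le\frac{2N}{M+1}$ is too lossy. For $N=3$, $M=8$ you get $-\log\bigl(1-\frac{2N}{M+1}\bigr)=\log 3>\frac{8}{9}=\frac{2(N+1)}{M+1}$, and the same happens whenever $M$ is not large compared with $N^2$. Since the discrepancy sits in the exponent, your final ``$\lesssim$'' cannot absorb it.

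Your plan to produce the factor $2$ from a square-root loss is also misplaced. The ratio above is largest at $k=N$, where it equals $\frac{1}{N+1+Nr}\le\frac{1}{N+1}$, so $|I_n|\le(N+1)^{-n}$ holds digit by digit. Even your two-step bound $q_{n+2}\ge N(N+1)q_n$ gives this up to a constant, with no square root lost. Moreover, getting the factor $2$ out of the contraction would require $-\log(1-\eta)\le\eta$, which is false. The $2$ actually comes from $-\log(1-x)\le 2x$ at $x=\frac{N+1}{M+1}\le\frac34$, and that is where the hypothesis on $M$ is used.

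With these sharp inputs your length-normalised measure does work: $\mu(I_n)\le\bigl(1-\frac{N+1}{M+1}\bigr)^{-n}|I_n|\le|I_n|^{s}$ for the stated $s$. This is a legitimate repackaging of the paper's proof. The paper feeds the same two estimates into the one-step inequality $|I_{n-1}|^s\le\sum_{k=N}^{M}|I_n|^s$ for the $s$-weighted measure.

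\textbf{The upper bound.} The inequality $x^{1-\delta}\le x(1+\delta\log(1/x))$ is backwards, because $x^{-\delta}=e^{\delta\log(1/x)}\ge1+\delta\log(1/x)$. Use instead $x^{-\delta}\le x_{\min}^{-\delta}$ with $x_{\min}\ge\frac{N}{(M+1)^2}$, the ratio at $k=M$. This yields the condition $\bigl(\frac{(M+1)^2}{N}\bigr)^{\delta}\bigl(1-\frac{N}{M+1}\bigr)\le1$. For your $\delta$ this reads $e^{y}(1-y)\le1$ with $y=\frac{N}{M+1}$, which is true. So the upper bound survives, and it then coincides with the paper's argument. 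The intervals $J$ are not needed, since the cylinders $I_n$ already work, and $M>2N+1$ plays no role there.
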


Building upon the theory of sets with bounded digits, we also investigate the Hausdorff dimension of sets defined by conditions where the digits in the $N$-expansion tend to infinity. This mirrors the classical work of I.~J.~Good \cite{Good1941} for regular continued fractions.

Define the sets with growing digits:
\begin{align}
F_N &:= \left\{ x \in (0,1) \setminus \mathbb{Q} : \varepsilon_n(x) \to \infty \right\}, \label{def:F-set1}\\
F_{\alpha, N} &:= \left\{ x \in (0,1) \setminus \mathbb{Q} : \varepsilon_n(x) \geq \alpha \text{ for all } n \geq 1, \, \alpha \geq N \right\}.\label{def:F-set2}
\end{align}

The set $F_N$ consists of numbers whose digits become arbitrarily large, while $F_{\alpha,N}$ consists of numbers with uniformly large digits. Clearly, $F_{\alpha,N} \subseteq F_N$ for all $\alpha \geq N$.

Our second main result extends Good's theorem \cite{Good1941} to $N$-expansions.

\begin{theorem}[Good-type Theorem for $N$-expansions] \label{thm:Good2N}
Let $N \geq 1$ be an integer. For every real $\alpha$ with $\log(\alpha-1) > \exp(1+N)$,
\begin{equation}\label{eq:main-ineq}
\frac12 + \frac{1}{2\log\!\bigl({\alpha}+2\bigr)} 
\;\leq\; 
\dimH(F_{\alpha,N}) 
\;\leq\; 
\frac12 + \frac{\log\log\!\bigl({\alpha}-1\bigr)}{2\log\!\bigl({\alpha}-1\bigr)} .
\end{equation}
Consequently, $\displaystyle \lim_{\alpha \to \infty} \dimH(F_{\alpha,N}) = \frac12$, and {for sufficiently large $\alpha$ (depending on $N$) the bounds become independent of $N$, converging to $1/2$ from above at a logarithmic rate}.
\end{theorem}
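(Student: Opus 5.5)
We prove the two inequalities in \eqref{eq:main-ineq} separately. Both rest on the basic geometry of $N$-expansions. Write $q_n = \varepsilon_n q_{n-1} + N q_{n-2}$ with $q_{-1}=0$, $q_0=1$. The fundamental interval $I(\varepsilon_1,\dots,\varepsilon_n)$ then has length $N^n/\bigl(q_n(q_n+Nq_{n-1})\bigr)$, which is comparable (within factors depending only on $N$) to $N^n/q_n^2$. Moreover $q_n/q_{n-1}\in[\varepsilon_n,\varepsilon_n+1]$ whenever $\varepsilon_n\ge N$, because $Nq_{n-2}/q_{n-1}\le 1$. Consequently
\[
\frac{|I(\varepsilon_1,\ldots,\varepsilon_n,k)|}{|I(\varepsilon_1,\ldots,\varepsilon_n)|}\asymp \frac{N}{k^2},
\]
with constants $c_1,c_2$ depending on $N$ only. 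The whole proof tracks how the factor $N$ per level interacts with $k^{-2}$. The hypothesis $\log(\alpha-1)>e^{1+N}$ is used exactly to absorb these $N$-dependent constants.

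\textbf{Upper bound.} We use the natural covers of $F_{\alpha,N}$ by the level-$n$ fundamental intervals with all digits $\ge \lceil\alpha\rceil$. For $s>1/2$ define
\[
S_n(s)=\sum_{\varepsilon_1,\dots,\varepsilon_n\ge\alpha}|I(\varepsilon_1,\dots,\varepsilon_n)|^s .
\]
By the ratio estimate, $S_{n+1}(s)\le S_n(s)\,\sup_{\text{past}}\sum_{k\ge\alpha}(c_2N/k^2)^s$. Since $k\mapsto k^{-2s}$ is decreasing, the sum is bounded by an integral from $\alpha-1$, giving
\[
\sum_{k\ge \alpha}k^{-2s}\le \frac{(\alpha-1)^{1-2s}}{2s-1}.
\]
A more careful version uses $q_{n+1}\ge kq_n$ exactly, which reduces the level factor to essentially $N^{s}$ times this sum. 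Now choose
\[
s=\tfrac12+\frac{\log\log(\alpha-1)}{2\log(\alpha-1)}.
\]
Then $(\alpha-1)^{1-2s}=1/\log(\alpha-1)$ and $2s-1=\log\log(\alpha-1)/\log(\alpha-1)$, so the per-level factor is at most
\[
\frac{C_N}{\log\log(\alpha-1)}.
\]
The condition $\log(\alpha-1)>e^{1+N}$ gives $\log\log(\alpha-1)>1+N$. This must make the factor at most $1$, which requires verifying that $C_N\le N+1$; here $C_N$ is essentially $N^s\le N$ up to the ratio constants. Then $S_n(s)$ stays bounded while the mesh tends to $0$, so $\mathcal H^s(F_{\alpha,N})<\infty$ and $\dimH F_{\alpha,N}\le s$. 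For the boundary value we take $s$ slightly larger and conclude by monotonicity.

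\textbf{Lower bound.} We use a mass distribution argument on a Cantor subset. It is convenient to take digits in a block $\alpha\le k\le K$ with $K\to\infty$, or all $k\ge\alpha$, and to use the natural measure
\[
\mu(I(\varepsilon_1,\ldots,\varepsilon_n))=\prod_j p_{\varepsilon_j},\qquad p_k\propto k^{-2t}.
\]
Alternatively, as in Good's original proof, we look for $t$ with
\[
\sum_{k\ge\alpha}\bigl(c_1N/(k+1)^2\bigr)^t\ge 1,
\]
which forces the pressure-type sum at level $n$ to be $\ge 1$. We compare with the integral from $\alpha+1$, which equals $(\alpha+1)^{1-2t}/(2t-1)$. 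With $t=\frac12+\frac{1}{2\log(\alpha+2)}$ we get $2t-1=1/\log(\alpha+2)$ and $(\alpha+2)^{1-2t}=e^{-1}$. The sum is then about $\log(\alpha+2)\,e^{-1}(c_1N)^t$, which exceeds $1$ once $\log(\alpha+2)>e^{1+N}$-type thresholds hold; this is guaranteed by the hypothesis. We then show $\mu(J)\lesssim |J|^{t}$ for arbitrary small intervals $J$ by the standard argument: locate the level $n$ with $J$ meeting few level-$(n+1)$ intervals, and exploit the gaps between consecutive fundamental intervals, which are comparable in length to the intervals themselves. The mass distribution principle then gives $\dimH F_{\alpha,N}\ge t$.

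The limit $\dimH(F_{\alpha,N})\to\frac12$ follows immediately by squeezing. The main obstacle is the lower bound's interval-to-ball step. Cylinders of children with digit $k$ near $\alpha$ versus large $k$ have wildly different sizes, so a general interval $J$ may cover many small children. I would handle this first by truncating digits to $[\alpha,K]$, losing only $o(1)$ in $t$ as $K\to\infty$, so that all children have comparable size up to $K^2$. Then the measure $\sum_{k\ge m}p_k$ of a tail of children must be bounded against the length of their union, about $N/m$ relative. A secondary delicate point is checking that the $N$-dependent constants really are absorbed by the assumption $\log(\alpha-1)>e^{1+N}$.
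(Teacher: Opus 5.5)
Your proposal is correct and follows essentially the same route as the paper. The upper bound uses the natural covers by level-$n$ cylinders with all digits $\ge\alpha$, bounds the child-to-parent factor by a constant times $\int_{\alpha-1}^\infty x^{-2s}\,dx$, and closes with $\log\log(\alpha-1)>1+N$; the paper's constant there is $(1+N)^s$, coming from $q_{n+1}\ge kq_n$ and $|I_n|\ge N^{n+1}/((1+N)q_n^2)$. The lower bound uses the mass distribution principle on the truncated Cantor set $F_{\alpha,\beta,N}$, with a child-to-parent ratio bound of order $N/k^2$ and an integral comparison.

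Two minor remarks. First, the exact length is $N^n/\bigl(q_n(q_n+q_{n-1})\bigr)$, not $N^n/\bigl(q_n(q_n+Nq_{n-1})\bigr)$; this is harmless, since you only use comparability with $N^n/q_n^2$. Second, your lower-bound bookkeeping keeps the full tail $(c_1N)^t e^{-1}\log(\alpha+2)$ rather than halving it when truncating, as the paper does. This leaves genuine slack for every $N\ge1$ at the threshold. By contrast, the paper's sufficient condition $N^s3^{-s}\alpha^{-(2s-1)}/\bigl(2(2s-1)\bigr)\ge1$ evaluates to about $0.73$ for $N=1$ when $\log(\alpha-1)$ is just above $e^2$, so on this point your version is the more robust one.
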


{In a different but related direction, recent work has studied $N$-expansions and similar continued fraction algorithms from the perspective of infinite iterated function systems with decaying properties. Shi, Tan, and Zhou \cite{STZ26} introduced the concept of $d$-decaying Gauss-like systems and observed that the $N$-continued fraction map fits into this framework. This observation has led to several results on the Hausdorff dimension of sets defined by digit growth conditions. In particular, Jordan and Rams \cite{J12} proved that for the set $F_N$ defined in \eqref{def:F-set1}, one has $\dim_H(F_N) = 1/2$, using methods from infinite iterated function systems. Further results on growth rates and Birkhoff sums in such systems can be found in \cite{L22, Zha20, Zha25}. Our work complements these results by providing explicit bounds with precise constants for $F_{\alpha,N}$ and by giving a self-contained proof of $\dim_H(F_N)=1/2$ using classical covering and mass distribution techniques adapted to $N$-expansions, rather than relying on the general IFS framework.}

\begin{remark}{[Role of the parameter $N$]
At first glance, the bounds in Theorem~\ref{thm:Good2N} appear independent of $N$, which might seem surprising. This independence, however, is a consequence of the estimates used in the proofs rather than an intrinsic property of the dimension itself. In the upper bound, the exact condition derived from the covering argument is $(2s-1)(\alpha-1)^{2s-1} = 1+N$, which would yield a dimension $s_N(\alpha)$ explicitly depending on $N$. To obtain a clean $N$-independent bound, we estimated this solution by $\frac12 + \frac{\log\log(\alpha-1)}{2\log(\alpha-1)}$, valid under $\log(\alpha-1) > \exp(1+N)$. Thus $N$ reappears in the range of validity. Similarly, the lower bound involves $N$ through factors like $N^s3^{-s}$ and the estimate $-\log t > 1+N$, with the final bound $\frac12 + \frac{1}{2\log(\alpha+2)}$ being the simplest expression guaranteed under the hypothesis.}

{
More generally, the presence of $N$ introduces several modifications throughout the proofs: the recurrence $q_n = \varepsilon_n q_{n-1} + N q_{n-2}$ with $\varepsilon_n \ge N$ changes growth rates (Lemma~\ref{lemma:q-growth}) and forces the ratio $r = q_{n-2}/q_{n-1}$ into the narrower interval $[0,1/N]$; the length formula $|I_n| = N^n/(q_n(q_n+q_{n-1}))$ introduces an explicit factor $N^n$; and the interval ratio bounds become $\frac{N}{3k^2} \le \frac{|I_{n+1}|}{|I_n|} \le \frac{2N}{k^2}$. These modifications are reflected in the final estimates: for bounded digits the constants involve $\log(N+1)$ and $(N+1)/(M+1)$, while for growing digits the asymptotic $\dim_H(F_{\alpha,N}) \to \tfrac12$ is approached at a rate depending on $N$. The bounds reduce to the classical ones exactly when $N=1$.}
\end{remark}

\begin{remark}{[Sharpness of the bounds]
The term ``sharp'' in the title refers to the fact that our estimates capture the correct asymptotic behavior in the limits $M\to\infty$ and $\alpha\to\infty$, with constants explicitly given in terms of $N$. For $E_M$, the upper bound behaves like $1-\frac{1}{M\log M}$, the optimal order achievable by covering arguments; the lower bound $1-O(1/M)$ improves on classical Jarn\'ik estimates and matches the structure expected from the mass distribution principle. For $F_{\alpha,N}$, both bounds converge to $\tfrac12$ and their difference tends to zero, providing a precise asymptotic description. The lower bound $\frac12+\frac{1}{2\log(\alpha+2)}$ is of order $1/\log\alpha$, believed to be optimal, while the upper bound $\frac12+\frac{\log\log(\alpha-1)}{2\log(\alpha-1)}$ gives an explicit rate. Further improvements would require more refined techniques beyond covering and mass distribution arguments.}
\end{remark}

\noindent
\textbf{Structure of the paper:} Both main results follow the same proof strategy: establishing separate upper and lower bounds through complementary techniques (covering arguments for upper bounds, mass distribution principles for lower bounds), then combining them. Section~2 provides the necessary background on $N$-expansions. Section~3 establishes growth estimates for denominators and fundamental intervals. Theorem~\ref{thm:main-bounded} is proved in Section~4, where we first establish the lower bound (Theorem~\ref{thm:improved-lower}) and the upper bound (Theorem~\ref{thm:improved-upper}) separately, then combine them. Theorem~\ref{thm:Good2N} is proved in Section~5, again via separate upper and lower bounds (Theorem~\ref{upperbound:Good2N} and Theorem~\ref{lowerbound:Good2N}, respectively).

\section{Preliminaries on $N$-expansions}

\subsection{Basic Definitions and Recurrence Relations}

Throughout this paper, we fix an integer $N \geq 1$. The $N$-expansion is defined through the following transformation, {see also \cite{DKL, DKvdW}}.

\begin{definition}
The \emph{$N$-expansion transformation} $T_N: [0,1] \to [0,1]$ is defined by
\[
T_N(x) := 
\begin{cases}
\frac{N}{x} - \left\lfloor \frac{N}{x} \right\rfloor & \text{if } x \neq 0, \\
0 & \text{if } x = 0,
\end{cases}
\]
where $\left\lfloor \cdot \right\rfloor$ stands for integer part. 
The associated \emph{digit function} $\eta: [0,1] \to \N \cup \{\infty\}$ is given by
\[
\eta(x) := 
\begin{cases}
\left\lfloor \frac{N}{x} \right\rfloor & \text{if } x \neq 0, \\
\infty & \text{if } x = 0.
\end{cases}
\]
For any irrational $x \in (0,1)$, the {$N$-expansion digits} are defined recursively as $\varepsilon_n(x) := \eta(T_N^{n-1}(x))$ for $n \geq 1$.
\end{definition}

The convergents $p_n/q_n = [\varepsilon_1, \ldots, \varepsilon_n]_N$ satisfy the fundamental recurrence relations established in \cite{Lascu2017}:
\begin{align}
p_n &= \varepsilon_n p_{n-1} + N p_{n-2}, \quad n \geq 2, \label{eq:p-recursion} \\
q_n &= \varepsilon_n q_{n-1} + N q_{n-2}, \quad n \geq 1, \label{eq:q-recursion}
\end{align}
with initial conditions
{
\begin{align*}
p_{-1}=1,\quad q_{-1}=0,\quad p_0=0,\quad q_0=1,\quad p_1=N,\quad q_1=\varepsilon_1.
\end{align*}
}
A crucial identity that follows from these recurrences is the determinant formula:
\begin{equation}
p_{n-1}q_n - p_nq_{n-1} = (-N)^n, \quad n \in \N. \label{eq:determinant}
\end{equation}

\section{Growth Estimates for Denominators and Fundamental Intervals}

The denominators $q_n$ play a pivotal role in determining the metric properties of fundamental intervals. We begin with fundamental growth estimates that hold for all admissible digit sequences.

\begin{lemma} \label{lemma:q-growth}
For all $n \in \N_+$ and sequences $\varepsilon_1, \ldots, \varepsilon_n \geq N$, the denominators satisfy:
\begin{enumerate}
\item[(i)] ${q_{n}(\varepsilon_1, \ldots, \varepsilon_n)} \ge N {q_{n-1}(\varepsilon_1, \ldots, \varepsilon_{n-1})}$;
\item[(ii)] $q_n(\varepsilon_1, \ldots, \varepsilon_n) \geq N^n$;
\item[(iii)] $q_n(\varepsilon_1, \ldots, \varepsilon_n) \geq (2N)^{\frac{n-1}{2}}$.
\end{enumerate}
\end{lemma}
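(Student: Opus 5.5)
The argument works directly from the recurrence \eqref{eq:q-recursion}, $q_n = \varepsilon_n q_{n-1} + N q_{n-2}$, with $q_{-1}=0$, $q_0=1$, and the digit constraint $\varepsilon_n \ge N$. First we record that all $q_n$ are nonnegative, and in fact positive for $n\ge 0$. This follows by a trivial induction, since the recurrence has nonnegative coefficients and $q_0=1$, $q_1=\varepsilon_1\ge N\ge 1$.

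For (i), we drop the nonnegative term $Nq_{n-2}$ and use $\varepsilon_n\ge N$ to get $q_n \ge \varepsilon_n q_{n-1} \ge N q_{n-1}$ for every $n\ge 1$. For $n=1$ this reads $q_1=\varepsilon_1\ge N=Nq_0$. Part (ii) then follows by iterating (i) down to $q_0=1$, giving $q_n\ge N^n q_0 = N^n$.

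For (iii), the plan is to prove the two-step inequality $q_n \ge 2N q_{n-2}$ for $n\ge 2$. From the recurrence and (i) applied to $q_{n-1}$,
\[
q_n \ge N q_{n-1} + N q_{n-2} \ge N\cdot N q_{n-2} + N q_{n-2} = (N^2+N)q_{n-2} \ge 2N q_{n-2},
\]
where the last step uses $N^2\ge N$. The conclusion comes from a parity split, iterating the two-step bound down to $q_0$ or $q_1$.

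If $n=2k$, then $q_n\ge (2N)^k q_0=(2N)^{n/2}\ge (2N)^{(n-1)/2}$, using $2N\ge 1$.

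If $n=2k+1$, then $q_n \ge (2N)^k q_1 \ge (2N)^k = (2N)^{(n-1)/2}$, using $q_1=\varepsilon_1\ge N\ge 1$.

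The statement is elementary, so no step is a real obstacle. The only point requiring care is the base cases: checking that (i) holds at $n=1$ with the convention $q_{-1}=0$, and that the odd case in (iii) correctly uses $q_1\ge 1$ rather than $q_1\ge 2N$. That distinction is exactly why the exponent is $(n-1)/2$ rather than $n/2$.
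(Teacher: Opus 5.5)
Your proof is correct. Parts (i) and (ii) match the paper's argument, but for (iii) you take a somewhat different route. The paper runs a two-term induction directly on the target bound: it inserts both hypotheses $q_{n-1}\ge(2N)^{(n-2)/2}$ and $q_{n-2}\ge(2N)^{(n-3)/2}$ into the recurrence and reduces the inductive step to $(2N)^{1/2}+1\ge 2$, after checking the base cases $n=1,2$ (including $N^2+N\ge\sqrt{2N}$). You instead feed (i) back into the recurrence to get the multiplicative two-step bound $q_n\ge (N^2+N)q_{n-2}\ge 2Nq_{n-2}$. You then iterate it along each parity class down to $q_0$ or $q_1$. Your version has three advantages: it avoids fractional-exponent base cases, it actually yields the stronger bound $q_n\ge (N^2+N)^{\lfloor n/2\rfloor}$, and it shows why the exponent is $(n-1)/2$ (the odd chain bottoms out at $q_1\ge 1$). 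The paper's version is a single uniform induction with no case split. One further observation, which neither argument needs: for $N\ge 2$ you have $N^2\ge 2N$, so (iii) already follows from (ii) via $N^n\ge(2N)^{n/2}$. Part (iii) therefore has independent content only for $N=1$, where it is the classical bound $q_n\ge 2^{(n-1)/2}$.
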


\begin{proof}
(i) From the recurrence \eqref{eq:q-recursion} and since $\varepsilon_n \geq N$, we have:
\[
q_n = \varepsilon_n q_{n-1} + N q_{n-2} \geq N q_{n-1} + N q_{n-2} \ge N q_{n-1}.
\]
(ii) Since $q_0$ = 1 and $q_1 = \varepsilon_1 \geq N$, by (i) and an induction argument, we obtain that $q_n \geq N^n$. 

\noindent (iii) 
We proceed by induction. The recurrence is $q_n = \varepsilon_n q_{n-1} + N q_{n-2}$ with $q_{-1}=0$, $q_0=1$.

\noindent
\textbf{Base cases:}
For $n=1$: $q_1 = \varepsilon_1 \geq N {\ge} (2N)^{0}$.
For $n=2$: $q_2 = \varepsilon_2 q_1 + N q_0 \geq N \cdot N + N \cdot 1 = N^2 + N$.
Since $N^2 + N \geq \sqrt{2N}$ for all $N \geq 1$ (check: $N^2+N - \sqrt{2N} \geq 0$), the base cases hold.

\noindent
\textbf{Inductive step:} Assume $q_{n-1} \geq (2N)^{\frac{n-2}{2}}$ and $q_{n-2} \geq (2N)^{\frac{n-3}{2}}$ for some $n \geq 2$. Then
\[
q_n = \varepsilon_n q_{n-1} + N q_{n-2} 
    \geq N \cdot (2N)^{\frac{n-2}{2}} + N \cdot (2N)^{\frac{n-3}{2}}
    = N(2N)^{\frac{n-3}{2}}\bigl((2N)^{1/2} + 1\bigr).
\]
We need to show this is at least $(2N)^{\frac{n-1}{2}} = (2N)^{\frac{n-3}{2}} \cdot 2N$. 
It suffices that $N\bigl((2N)^{1/2} + 1\bigr) \geq 2N$, i.e., $(2N)^{1/2} + 1 \geq 2$, which holds for all $N \geq 1$. 
Thus $q_n \geq (2N)^{\frac{n-1}{2}}$.
\end{proof}

For sets with uniformly large digits, we obtain more precise estimates that are essential for our dimensional analysis.


 


The combinatorial structure of $N$-expansions is captured by fundamental intervals, which play a central role in our dimensional analysis.

\begin{definition} \label{def.3.2}
For any admissible sequence $(\varepsilon_1, \ldots, \varepsilon_n)$ with $\varepsilon_i \geq N$, the \emph{fundamental interval} of order $n$ is defined as
\[
I_n(\varepsilon_1, \ldots, \varepsilon_n) = \{ x \in (0,1) \setminus \Q : \varepsilon_1(x) = \varepsilon_1, \ldots, \varepsilon_n(x) = \varepsilon_n \}.
\]
\end{definition}

In \cite{Lascu2017} it was shown that 
\begin{equation} \label{3.1}
 I_n(\varepsilon_1, \ldots, \varepsilon_n) = \begin{cases}
\left[\displaystyle \frac{p_n}{q_n}, \frac{p_n+p_{n-1}}{q_n+q_{n-1}} \right)  & \, \text{if } n \, \text{ is even }, \\
\left(\displaystyle \frac{p_n+p_{n-1}}{q_n+q_{n-1}}, \frac{p_n}{q_n} \right]  & \, \text{if } n \, \text{ is odd. }
\end{cases}
\end{equation}
The following proposition provides the essential metric properties of these intervals.

\begin{proposition} \label{prop:interval-length}
We have the following: 
\begin{itemize}
    \item[(i)] The length of a fundamental interval of order $n$ satisfies the exact formula:
\begin{equation}
|I_n(\varepsilon_1, \ldots, \varepsilon_n)| = \frac{N^n}{q_n(q_n + q_{n-1})}; \label{eq:interval-length}
\end{equation}
\item[(ii)] The fundamental interval of order $n$ has the two-sided bounds:
\begin{equation}
\frac{N^{n+1}}{(1+N)q_n^2} \leq |I_n(\varepsilon_1, \ldots, \varepsilon_n)| \leq \frac{N^n}{q_n^2}; \label{eq:interval-bounds}
\end{equation}
\item[(iii)] The length ratio estimates of $I_{n+1}$ and $I_{n}$ is: 
\begin{equation}
\frac{N}{3k^2} \le \frac{|I_{n+1}(\varepsilon_1, \ldots, \varepsilon_n, k)|}{|I_n(\varepsilon_1, \ldots, \varepsilon_n)|} \le \frac{2N}{k^2}, \quad k \ge N. \label{3.3}
\end{equation}
\end{itemize}
\end{proposition}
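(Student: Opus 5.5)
Part (i) follows from the endpoint description \eqref{3.1} together with the determinant formula \eqref{eq:determinant}. In either parity case the length of $I_n$ is
\[
\left|\frac{p_n+p_{n-1}}{q_n+q_{n-1}}-\frac{p_n}{q_n}\right|
=\frac{|p_{n-1}q_n-p_nq_{n-1}|}{q_n(q_n+q_{n-1})}
=\frac{N^n}{q_n(q_n+q_{n-1})}.
\]
Here the numerator comes from cross-multiplying: $(p_n+p_{n-1})q_n-p_n(q_n+q_{n-1})=p_{n-1}q_n-p_nq_{n-1}=(-N)^n$. The half-open endpoints do not affect the length.

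For part (ii), the upper bound is immediate, since $q_{n-1}\ge 0$ gives $q_n+q_{n-1}\ge q_n$. For the lower bound I need $q_{n-1}\le q_n/N$, which is exactly Lemma~\ref{lemma:q-growth}(i); the case $n=1$ is trivial because $q_0=1\le \varepsilon_1/N$. This gives
\[
q_n+q_{n-1}\le q_n\Bigl(1+\tfrac1N\Bigr)=\frac{(N+1)q_n}{N},
\]
and substituting into (i) yields $|I_n|\ge N^{n+1}/((1+N)q_n^2)$.

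For part (iii), I take the ratio of the two exact formulas from (i), with $q_{n+1}=kq_n+Nq_{n-1}$:
\[
\frac{|I_{n+1}|}{|I_n|}=\frac{N\,q_n(q_n+q_{n-1})}{q_{n+1}(q_{n+1}+q_n)}.
\]
Put $r=q_{n-1}/q_n\in[0,1/N]$, where the range of $r$ again comes from Lemma~\ref{lemma:q-growth}(i). The ratio then becomes
\[
\frac{N(1+r)}{(k+Nr)(k+Nr+1)}.
\]
\textbf{Upper bound.} Dropping the $Nr$ terms in the denominator and using $1+r\le 2$ gives at most $\frac{2N}{k(k+1)}\le \frac{2N}{k^2}$. (A sharper constant is available, but $2N$ is what the statement needs.)

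\textbf{Lower bound.} Use $1+r\ge 1$, $Nr\le 1$ and $k\ge N\ge1$. The denominator is then at most $(k+1)(k+2)\le 6k^2$, because $k+1\le 2k$ and $k+2\le 3k$. This only gives $\frac{N}{6k^2}$, which is a factor of two short of the claim, so the factor $1+r$ must be kept.

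Tightening the lower bound in (iii) is the one delicate point. Writing $x=Nr\in[0,1]$, I want
\[
\frac{1+x/N}{(k+x)(k+x+1)}\ge\frac{1}{3k^2},
\]
which is equivalent to $3k^2(1+x/N)\ge (k+x)(k+x+1)$.

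At $x=0$ this reads $3k^2\ge k^2+k$, which holds. At $x=1$ it reads $3k^2(1+1/N)\ge (k+1)(k+2)$. For $N=1$ this is $6k^2\ge k^2+3k+2$, which holds. For $N\ge2$ we have $k\ge N\ge2$, and then $(k+1)(k+2)=k^2+3k+2\le 3k^2$ because $2k^2\ge 3k+2$ when $k\ge2$.

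Since the left-hand side is linear in $x$ and the right-hand side is convex in $x$, checking the two endpoints does not by itself settle the intermediate values. So I will verify the inequality directly. For $k\ge2$, one has $(k+x)(k+x+1)\le (k+1)(k+2)\le 3k^2$ for all $x\in[0,1]$, so the inequality holds even without the factor $1+x/N$. The only remaining case is $k=N=1$, where the condition is $3(1+x)\ge (1+x)(2+x)$, i.e. $3\ge 2+x$, which holds for $x\le1$. This completes (iii).
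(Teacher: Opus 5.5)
Your proof is correct and follows the paper's route: (i) from the endpoints in \eqref{3.1} and the determinant identity, (ii) from $q_{n-1}\le q_n/N$, and (iii) by writing the ratio as $N(1+r)/\bigl((k+Nr)(k+Nr+1)\bigr)$ with $r=q_{n-1}/q_n\in[0,1/N]$, where you actually verify the bounds $1/3$ and $2$ on the bracketed factor (including the case $k=N=1$) that the paper only asserts. One small correction: your endpoint check at $x=0$ and $x=1$ would already have sufficed, because a linear function minus a convex one is concave and so attains its minimum on $[0,1]$ at an endpoint.
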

\begin{proof}
$(i)$ It follows immediately from \eqref{3.1} and \eqref{eq:determinant}.

\noindent $(ii)$  
Using Lemma \ref{lemma:q-growth}(i) and $q_{n-1}>0$ in \eqref{eq:interval-length}, we obtain the lower and upper bounds in \eqref{eq:interval-bounds}.

\noindent $(iii)$ By {\eqref{eq:interval-length}}, 
\begin{align}
|I_{n+1}(\varepsilon_1, \ldots, \varepsilon_n, k)| &= \left| \frac{(k+1)p_n+Np_{n-1}}{(k+1)q_n+Nq_{n-1}} - \frac{kp_n+Np_{n-1}}{kq_n+Nq_{n-1}} \right| \nonumber \\
&= \frac{N^{n+1}}{k^2q^2_n(1+\frac{1}{k} +\frac{N}{k}\frac{q_{n-1}}{q_n})(1+\frac{N}{k}\frac{q_{n-1}}{q_n})}{.} 
\end{align}
Thus, it follows that
\begin{equation*} 
\frac{I_{n+1}(\varepsilon_1, \ldots, \varepsilon_n, k)}{I_{n}(\varepsilon_1, \ldots, \varepsilon_n)} = 
\frac{N(1+\frac{q_{n-1}}{q_n})}{k^2 (1+\frac{1}{k} +\frac{N}{k}\frac{q_{n-1}}{q_n})(1+\frac{N}{k}\frac{q_{n-1}}{q_n})}.
\end{equation*}
Since 
\begin{equation*}
\frac{1}{3} \le \frac{1+\frac{q_{n-1}}{q_n}}{ (1+\frac{1}{k} +\frac{N}{k}\frac{q_{n-1}}{q_n})(1+\frac{N}{k}\frac{q_{n-1}}{q_n})} \le 2
\end{equation*}
then the proof is complete. 
\end{proof}

\section{Dimension Bounds for Sets with Bounded Digits}

We now establish our main results for sets with bounded digits, beginning with the foundational covering principles.

\subsection{Mass Distribution and Covering Principles}

{We begin by recalling the classical Mass Distribution Principle (see, e.g., \cite{Falconer2004}), which provides a powerful method for obtaining lower bounds for the Hausdorff dimension of a set.
\begin{theorem}[Mass Distribution Principle]
Let $\mu$ be a probability measure supported on a Borel set $E \subset \mathbb{R}^n$. Suppose there exist constants $c > 0$ and $\delta > 0$ such that
\[
\mu(U) \le c \, |U|^s
\]
for every set $U \subset \mathbb{R}^n$ with diameter $|U| \le \delta$. Then $\dim_H(E) \ge s$.
\end{theorem}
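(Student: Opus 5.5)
This is the standard Mass Distribution Principle (Falconer), so I would give the classical short argument directly from the definition of Hausdorff measure rather than anything specific to $N$-expansions.

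Fix an arbitrary countable cover $\{U_i\}$ of $E$ with $|U_i|\le\delta$ for every $i$. Since $\mu$ is a probability measure supported on $E$, we have $\mu(E)=1$. Countable subadditivity together with the hypothesis $\mu(U_i)\le c|U_i|^s$ then gives
\[
1=\mu(E)\le \mu\Bigl(\bigcup_i U_i\Bigr)\le \sum_i \mu(U_i)\le c\sum_i |U_i|^s .
\]
One small point is measurability: the $U_i$ may not be Borel. To handle this, replace each $U_i$ by its closed convex hull (or its closure). This does not change the diameter, and the hypothesis is applied to the enlarged set, which is Borel and still contains $U_i$. Alternatively, one can use the outer measure induced by $\mu$ throughout.

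Since the cover was arbitrary, for every $0<\eta\le\delta$ we get
\[
\mathcal{H}^s_\eta(E)=\inf\Bigl\{\sum_i|U_i|^s\Bigr\}\ge 1/c .
\]
Letting $\eta\to0$ yields $\mathcal{H}^s(E)\ge 1/c>0$.

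It remains to turn positive $\mathcal{H}^s$-measure into a dimension bound. By the definition of Hausdorff dimension as the critical exponent, $\mathcal{H}^t(E)=\infty$ for $t<\dim_H E$ and $\mathcal{H}^t(E)=0$ for $t>\dim_H E$. Positivity of $\mathcal{H}^s(E)$ therefore forces $s\le\dim_H(E)$.

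The only step needing any care is the measurability point above; the rest is a direct chain of inequalities. In the paper this theorem will later be applied with $\mu$ a Cantor-type measure built on the fundamental intervals $I_n$. There the real work lies in verifying the hypothesis $\mu(U)\le c|U|^s$ using Proposition~\ref{prop:interval-length}, not in the principle itself.
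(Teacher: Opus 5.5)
Your argument is correct and is the standard Falconer proof. The paper gives no proof of its own here: it only recalls this principle with a citation to Falconer, and later, in the proof of Lemma~\ref{lem:mass-distribution}, uses exactly your intermediate conclusion $\mathcal{H}^s(E)\ge \mu(E)/c>0$. Your treatment of measurability, replacing each $U_i$ by its closure (same diameter, Borel), is a legitimate detail that the paper does not address.
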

The idea is to construct a measure $\mu$ on $E$ that distributes mass as evenly as possible, and then use the above principle to deduce a lower bound on the dimension. In our setting, the natural measure is constructed recursively on the fundamental intervals of $E_M$.}

\begin{lemma}[Mass Distribution Principle] \label{lem:mass-distribution}
Let $s \in (0,1)$ and $M \geq N$. If for all $n \geq 1$ and all admissible sequences $(\varepsilon_1, \ldots, \varepsilon_{n-1}) \in \{N, \ldots, M\}^{n-1}$, we have
\begin{equation}
|I_{n-1}(\varepsilon_1, \ldots, \varepsilon_{n-1})|^s \leq \sum_{k=N}^{M} |I_n(\varepsilon_1, \ldots, \varepsilon_{n-1}, k)|^s, \label{4.1}
\end{equation}
then $\dimH(E_M) \geq s$.
\end{lemma}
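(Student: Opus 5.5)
The plan is to build a probability measure $\mu$ on $E_M$ by distributing mass down the tree of fundamental intervals in proportion to the $s$-th powers of their lengths, and then to verify the hypothesis of the classical Mass Distribution Principle.

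\textbf{Construction of $\mu$.} Set $\mu(I_0)=1$, where $I_0=(0,1)$ and $|I_0|=1$. Recursively define
\[
\mu(I_n(\varepsilon_1,\ldots,\varepsilon_{n-1},k)) = \mu(I_{n-1}(\varepsilon_1,\ldots,\varepsilon_{n-1}))\,\frac{|I_n(\varepsilon_1,\ldots,\varepsilon_{n-1},k)|^s}{\sum_{j=N}^{M}|I_n(\varepsilon_1,\ldots,\varepsilon_{n-1},j)|^s}
\]
for $N\le k\le M$. This assignment is consistent, so Kolmogorov's extension theorem gives a Borel probability measure. It is supported on $\overline{E_M}$, which differs from $E_M$ by at most countably many points, and $\mu$ has no atoms because the lengths of fundamental intervals shrink to zero by Lemma~\ref{lemma:q-growth}(ii),(iii). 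By induction using \eqref{4.1}, each ratio is at most $|I_n|^s/|I_{n-1}|^s$, and telescoping gives
\[
\mu(I_n)\le |I_n|^s
\]
for every admissible $I_n$.

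\textbf{Passing from fundamental intervals to arbitrary sets.} Take a set $U$ of small diameter meeting $E_M$. Choose $n$ maximal such that $U\cap E_M$ lies in a single interval $I_{n-1}=I_{n-1}(\varepsilon_1,\ldots,\varepsilon_{n-1})$. Then $U$ meets at least two of its subintervals $I_n(\ldots,k)$ with $N\le k\le M$.

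\begin{itemize}
\item[(a)] \emph{Subintervals have comparable length.} By Proposition~\ref{prop:interval-length}(iii), $|I_n(\ldots,k)|\ge \frac{N}{3M^2}|I_{n-1}|$ for every $k\le M$.
\item[(b)] \emph{$U$ is not too small.} I need $|U|\gtrsim |I_{n-1}|$. The subintervals $I_n(\ldots,k)$ lie consecutively inside $I_{n-1}$, ordered monotonically in $k$. Hence if $U$ meets two of them, it meets points of $E_M$ in two distinct subintervals. Those points are separated by at least one full level-$(n+1)$ subinterval, for example the one with last digit $M+1$ lying at the outer edge, or a gap interval. Its length is $\ge c(N,M)|I_{n-1}|$, again by Proposition~\ref{prop:interval-length}(iii). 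This gives $|U|\ge c_1|I_{n-1}|$ with $c_1=c_1(N,M)>0$.
\end{itemize}

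Combining (b) with the telescoped bound,
\[
\mu(U)\le\mu(I_{n-1})\le |I_{n-1}|^s\le c_1^{-s}|U|^s .
\]
The Mass Distribution Principle then yields $\dimH(E_M)\ge s$. If $U$ meets $E_M$ inside a single $I_n$ for every $n$, then $U\cap E_M$ is at most a point, and $\mu(U)=0$ because $\mu$ has no atoms.

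\textbf{Main obstacle.} The delicate step is (b): showing that $U$ meeting two sibling intervals forces $|U|$ to be comparable to $|I_{n-1}|$. Two adjacent siblings $I_n(\ldots,k)$ and $I_n(\ldots,k+1)$ touch, so the separation must come from the next level. Concretely, points of $E_M$ in $I_n(\ldots,k)$ avoid the end region corresponding to digits $>M$, whose total length is comparable to $|I_n(\ldots,k)|\sim |I_{n-1}|/k^2$. I would make this precise using the explicit endpoints \eqref{3.1}, together with the fact that the union of $I_{n+1}(\ldots,k,j)$ over $j>M$ has length of order $|I_n|/M$, by summing Proposition~\ref{prop:interval-length}(iii). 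The resulting constant depends on $N$ and $M$, which is harmless since only the exponent $s$ matters.
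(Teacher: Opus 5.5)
Your proof is correct, but it passes from fundamental intervals to arbitrary sets $U$ by a different route than the paper. The construction of $\mu$ and the telescoped bound $\mu(I_n)\le|I_n|^s$ are the same in both.

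The paper then picks a single level $n$ at which order-$(n-1)$ intervals have diameter $\ge|U|$ and order-$n$ intervals have diameter $<|U|$. It asserts, ``by the geometry of $N$-expansions'', that at most $K(N,M)$ order-$(n-1)$ intervals meet $U$, each of length $\le C|U|$, which gives $\mu(U)\le KC^s|U|^s$. Neither constant is actually derived. Moreover, as stated, such a level is generally unavailable. The longest order-$n$ intervals of $E_M$ (all digits $N$) shrink by a factor of at least $1/(N+2)$ per step. The shortest order-$(n-1)$ ones (all digits $M$) shrink by at most $(N+1)/M^2$ per step. So for $M>2N+1$ and large $n$ the former are longer than the latter.

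You instead stop at the deepest cylinder $I_{n-1}$ containing $U\cap E_M$ and use the gaps of $E_M$. Two points of $E_M$ lying in different children are separated by a whole child or a whole forbidden grandchild. Proposition~\ref{prop:interval-length}(iii) then gives $|U|\ge c_1|I_{n-1}|$, for instance with $c_1=N^2/(9M^2(M+1)^2)$. A single cylinder suffices and no counting is needed, so you get a complete proof from Proposition~\ref{prop:interval-length}(iii) alone. You also justify something the paper takes for granted: $\overline{E_M}\setminus E_M$ is countable and $\mu$ has no atoms, so $\mu(E_M)=1$. The paper's comparable-cylinder strategy, once made rigorous by an adaptive stopping-time cover, would not need gaps. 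Here, though, the gaps are exactly what keeps your argument short.

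When you write out (b), fix the orientation explicitly. The map $t\mapsto N/(k+t)$ reverses order, so the common endpoint of $I_n(\ldots,k)$ and $I_n(\ldots,k+1)$ is the ``digits $\to\infty$'' end of the larger-digit sibling. Hence $I_{n+1}(\ldots,k+1,M+1)$ lies between the two points. The digits-$>M$ region of $I_n(\ldots,k)$ does not: it sits at the opposite end of that interval. For non-adjacent $k<k'$, the whole child $I_n(\ldots,k+1)$ lies between the two points.
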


\begin{proof}
{Assume that for a given \(s \in (0,1)\) condition \eqref{4.1} holds for all admissible sequences.}
We construct a probability measure $\mu$ on $E_M$ recursively on the fundamental intervals. Define $\mu$ on the trivial interval $I_0 = (0,1)$ by $\mu(I_0) = 1$. 

Now, suppose $\mu$ has been defined on all fundamental intervals of order $n-1$. For each fundamental interval $I_{n-1} = I_{n-1}(\varepsilon_1, \ldots, \varepsilon_{n-1})$, we define $\mu$ on the fundamental intervals of order $n$ that refine $I_{n-1}$ (i.e., those of the form $I_n(\varepsilon_1, \ldots, \varepsilon_{n-1}, k)$ for $k = N, \ldots, M$) as follows:
\[
\mu(I_n(\varepsilon_1, \ldots, \varepsilon_{n-1}, k)) = \mu(I_{n-1}) \cdot \frac{|I_n(\varepsilon_1, \ldots, \varepsilon_{n-1}, k)|^s}{\sum_{j=N}^{M} |I_n(\varepsilon_1, \ldots, \varepsilon_{n-1}, j)|^s}.
\]

This recursive definition assigns a mass to each fundamental interval. By Carathéodory's extension theorem, $\mu$ extends uniquely to a Borel probability measure on $E_M$, since the fundamental intervals generate the Borel $\sigma$-algebra and satisfy the consistency conditions for a premeasure.

We now verify that for every fundamental interval $I_{n-1}$, we have:
\begin{equation}\label{4.2'}
\mu(I_{n-1}) \leq |I_{n-1}|^s.
\end{equation}
We proceed by induction. For the base case $n=1$, we have $I_0 = (0,1)$ and $\mu(I_0) = 1 = |I_0|^s$ since $|I_0| = 1$. 

Now assume the inequality holds for all fundamental intervals of order $n-1$. Consider a fundamental interval $I_n = I_n(\varepsilon_1, \ldots, \varepsilon_n)$. Then:
\begin{align*}
\mu(I_n) &= \mu(I_{n-1}) \cdot \frac{|I_n|^s}{\sum_{j=N}^{M} |I_n(\varepsilon_1, \ldots, \varepsilon_{n-1}, j)|^s} \\
&\leq |I_{n-1}|^s \cdot \frac{|I_n|^s}{\sum_{j=N}^{M} |I_n(\varepsilon_1, \ldots, \varepsilon_{n-1}, j)|^s} \quad \text{(by induction hypothesis)} \\
&\leq |I_{n-1}|^s \cdot \frac{|I_n|^s}{|I_{n-1}|^s} \quad \text{(by inequality \eqref{4.1})} \\
&= |I_n|^s.
\end{align*}
Thus, $\mu(I_n) \leq |I_n|^s$ for all fundamental intervals $I_n$.

The measure $\mu$ is constructed specifically on $E_M$, meaning $\mu(\mathbb{R} \setminus E_M) = 0$. Therefore, for any set $U \subset \mathbb{R}$, we have:
\[
\mu(U) = \mu(U \cap E_M).
\]
This is crucial because it means we only need to control the measure of subsets that actually intersect $E_M$.

To apply the mass distribution principle, we need to show there exist constants $c > 0$ and $\delta > 0$ such that for every set $U \subset \mathbb{R}$ with diameter $|U| < \delta$, we have:
\[
\mu(U) \leq c |U|^s.
\]

Since $\mu(U) = \mu(U \cap E_M)$, we can restrict our attention to sets $U$ that intersect $E_M$ (if $U \cap E_M = \emptyset$, then $\mu(U) = 0$ and the inequality holds trivially).

{Now, let $U$ be any set with $|U| < \delta$ and $U \cap E_M \neq \emptyset$. Since $E_M$ consists of numbers whose digits are bounded by $M$, the fundamental intervals of any fixed order have a positive minimal length. More precisely, from Lemma~\ref{lemma:q-growth} and Proposition~\ref{prop:interval-length}, we have
\[
|I_n| \ge \frac{N^{n+1}}{(1+N)q_n^2} \ge \frac{N^{n+1}}{(1+N)(M+N)^{2n}},
\]
which is positive and depends only on $n$, $N$, and $M$. Consequently, the diameters of fundamental intervals of a fixed order $n$ are bounded below by a positive constant.
Choose $n$ such that:
\begin{itemize}
    \item[-] fundamental intervals of order $n$ have diameter $< |U|$ (possible because diameters tend to $0$ as $n \to \infty$);
    \item[-] fundamental intervals of order $n-1$ have diameter $\geq |U|$ (possible by the lower bound above and the fact that $|U|$ is fixed).
\end{itemize}
This choice is legitimate precisely because the digits are bounded, ensuring that intervals of order $n-1$ cannot be arbitrarily small. }

Since $U \cap E_M \neq \emptyset$ and $E_M$ is contained in the union of all fundamental intervals of order $n-1$, the set $U$ can intersect only those fundamental intervals of order $n-1$ that have nonempty intersection with $U$. By the geometry of $N$-expansions, there is a uniform bound $K$ (depending only on $N$ and $M$) on how many fundamental intervals of order $n-1$ can intersect a set of diameter $|U|$.

Let $I_{n-1}^1, \ldots, I_{n-1}^m$ (with $m \leq K$) be these intersecting intervals. Then:
\[
U \cap E_M \subset \bigcup_{i=1}^m (U \cap I_{n-1}^i \cap E_M) \subset \bigcup_{i=1}^m I_{n-1}^i.
\]
Therefore{, by \eqref{4.2'}}:
\[
\mu(U) = \mu(U \cap E_M) \leq \sum_{i=1}^m \mu(I_{n-1}^i) \leq \sum_{i=1}^m |I_{n-1}^i|^s.
\]

Now, since each $I_{n-1}^i$ has diameter $\geq |U|$ but is not too much larger than $|U|$ (by the geometry of $N$-expansions), there exists a constant $C$ such that $|I_{n-1}^i| \leq C |U|$ for all $i$. Thus:
\[
\mu(U) \leq \sum_{i=1}^m (C |U|)^s = m C^s |U|^s \leq K C^s |U|^s.
\]

Taking $c = K C^s$, we have shown that for all sets $U$ with $|U| < \delta$:
\[
\mu(U) \leq c |U|^s.
\]

This is exactly the condition required by the mass distribution principle. Since $\mu$ is a probability measure supported on $E_M$ (i.e., $\mu(E_M) = 1$ and $\mu(\mathbb{R} \setminus E_M) = 0$), the mass distribution principle implies:
\[
\mathcal{H}^s(E_M) \geq \frac{\mu(E_M)}{c} = \frac{1}{c} > 0,
\]
which in turn implies $\dimH(E_M) \geq s$.
\end{proof}

\begin{lemma}[Covering Argument] \label{lem:covering}
Let $s \in (0,1)$ and $M \geq N$. If for all $n \geq 1$ and all admissible sequences $(\varepsilon_1, \ldots, \varepsilon_{n-1}) \in \{N, \ldots, M\}^{n-1}$, we have
\begin{equation}
|I_{n-1}(\varepsilon_1, \ldots, \varepsilon_{n-1})|^s \geq \sum_{k=N}^{M} |I_n(\varepsilon_1, \ldots, \varepsilon_{n-1}, k)|^s, \label{4.2}
\end{equation}
then $\dimH(E_M) \leq s$.
\end{lemma}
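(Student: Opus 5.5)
The plan is the natural covering argument: use the fundamental intervals of order $n$ as covers and let the inequality \eqref{4.2} control the $s$-dimensional sums level by level. Fix $s$ as in the statement. For $n\ge 0$ put
\[
S_n := \sum_{(\varepsilon_1,\ldots,\varepsilon_n)\in\{N,\ldots,M\}^n} |I_n(\varepsilon_1,\ldots,\varepsilon_n)|^s ,
\]
with $S_0 = |I_0|^s = 1$.

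The first step is to group the terms of $S_n$ according to their parent interval of order $n-1$. Applying \eqref{4.2} to each parent $I_{n-1}(\varepsilon_1,\ldots,\varepsilon_{n-1})$ gives $S_n \le S_{n-1}$. Induction then yields $S_n \le 1$ for every $n$.

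The second step is to check that the family $\{I_n(\varepsilon_1,\ldots,\varepsilon_n) : \varepsilon_i\in\{N,\ldots,M\}\}$ really covers $E_M$. This is immediate from Definition~\ref{def.3.2}: every $x\in E_M$ lies in $I_n(\varepsilon_1(x),\ldots,\varepsilon_n(x))$, and its digits lie in $\{N,\ldots,M\}$.

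The third step, which is the only genuinely non-formal point, is that the mesh of these covers tends to $0$. By Proposition~\ref{prop:interval-length}(ii) and Lemma~\ref{lemma:q-growth},
\[
|I_n| \le \frac{N^n}{q_n^2}.
\]
The bound $q_n\ge N^n$ from Lemma~\ref{lemma:q-growth}(ii) only gives $|I_n|\le N^{-n}$, which is useless when $N=1$. I would therefore use a two-step bound instead. From $q_n \ge N q_{n-1} + N q_{n-2}$ and $q_{n-1}\ge N q_{n-2}$ we get
\[
q_n \ge (N^2+N)\,q_{n-2}.
\]
Hence
\[
\frac{N^n}{q_n^2} \le \frac{N^{n-2}}{q_{n-2}^2}\cdot\frac{N^2}{(N^2+N)^2},
\]
and the ratio $N^2/(N^2+N)^2 = 1/(N+1)^2 < 1$. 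This gives $\delta_n := \max |I_n| \le C\,(N+1)^{-n} \to 0$ uniformly over all admissible words. Alternatively one can iterate Proposition~\ref{prop:interval-length}(iii): each refinement shrinks length by at most the factor $2N/k^2 \le 2/N$. That works for $N\ge 3$, but the two-step bound handles all $N\ge1$.

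Combining the three steps gives $\mathcal H^s_{\delta_n}(E_M) \le S_n \le 1$ for every $n$. Letting $n\to\infty$ yields $\mathcal H^s(E_M)\le 1<\infty$, and hence $\dimH(E_M)\le s$. No sharper conclusion is needed for this lemma.
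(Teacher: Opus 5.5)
Your proposal is correct and follows essentially the same route as the paper: you group the order-$n$ sums by parent to get $S_n\le S_{n-1}$, cover $E_M$ by the order-$n$ fundamental intervals, and let the mesh go to zero. Your mesh estimate via $q_n\ge (N^2+N)q_{n-2}$ actually improves on the paper's, whose bound $|I_n|\le N^n/q_n^2\le N^{-n}$ gives nothing when $N=1$ (Lemma~\ref{lemma:q-growth}(iii) would also have repaired this).
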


\begin{proof}
We will show that the $s$-dimensional Hausdorff measure $\mathcal{H}^s(E_M)$ is finite, which implies $\dimH(E_M) \leq s$.

For each $n \geq 1$, consider the collection $\mathcal{F}_n$ of all fundamental intervals of order $n$:
\[
\mathcal{F}_n = \{I_n(\varepsilon_1, \ldots, \varepsilon_n) : N \leq \varepsilon_i \leq M \text{ for } i = 1, \ldots, n\}.
\]
This collection forms a cover of $E_M$ since every $x \in E_M$ has its first $n$ digits bounded by $M$.

We now estimate the $s$-dimensional Hausdorff content of this cover. For any $\delta > 0$, choose $n$ large enough so that $\text{diam}(I_n) < \delta$ for all $I_n \in \mathcal{F}_n$. This is possible because the lengths of fundamental intervals tend to 0 as $n \to \infty$.

The $s$-dimensional Hausdorff measure at scale $\delta$ is defined as:
\[
\mathcal{H}^s_\delta(E_M) = \inf\left\{\sum_{i} |U_i|^s : \{U_i\} \text{ is a } \delta\text{-cover of } E_M\right\}.
\]
Taking the specific cover $\mathcal{F}_n$, we have:
\[
\mathcal{H}^s_\delta(E_M) \leq \sum_{I_n \in \mathcal{F}_n} |I_n|^s.
\]

We now show that this sum is bounded independently of $n$. Define:
\[
S_n = \sum_{I_n \in \mathcal{F}_n} |I_n|^s.
\]
We can compute $S_n$ recursively. For $n = 1$:
\[
S_1 = \sum_{k=N}^{M} |I_1(k)|^s.
\]
For $n \geq 2$, we group the intervals by their first $n-1$ digits:
\[
S_n = \sum_{I_{n-1} \in \mathcal{F}_{n-1}} \sum_{k=N}^{M} |I_n(\varepsilon_1, \ldots, \varepsilon_{n-1}, k)|^s.
\]
Using inequality (\ref{4.2}), we obtain:
\[
S_n \leq \sum_{I_{n-1} \in \mathcal{F}_{n-1}} |I_{n-1}|^s = S_{n-1}.
\]
Thus, the sequence $\{S_n\}$ is non-increasing. In particular, for all $n \geq 1$:
\[
S_n \leq S_1 = \sum_{k=N}^{M} |I_1(k)|^s.
\]
Since there are only finitely many choices for $k$ (from $N$ to $M$), and each $|I_1(k)|$ is bounded, $S_1$ is finite.

Therefore, for all $\delta > 0$ and sufficiently large $n$:
\[
\mathcal{H}^s_\delta(E_M) \leq S_n \leq S_1 < \infty.
\]
Taking the limit as $\delta \to 0^+$, we obtain:
\[
\mathcal{H}^s(E_M) = \lim_{\delta \to 0^+} \mathcal{H}^s_\delta(E_M) \leq S_1 < \infty.
\]
Since the $s$-dimensional Hausdorff measure is finite, we conclude that $\dimH(E_M) \leq s$.

To see why the diameters tend to 0, note that from Proposition \ref{prop:interval-length} and Lemma \ref{lemma:q-growth}, we have:
\[
|I_n| \leq \frac{N^n}{q_n^2} \leq \frac{N^n}{N^{2n}} = N^{-n} \to 0 \quad \text{as } n \to \infty.
\]
This ensures that for any $\delta > 0$, we can find $n$ such that $\text{diam}(I_n) < \delta$ for all $I_n \in \mathcal{F}_n$.

The condition $s \in (0,1)$ ensures that we are in the nontrivial range for Hausdorff dimension, though the argument works for any $s > 0$.
\end{proof}

\subsection{Lower and Upper Bounds for Sets with Bounded Digits}

We now prove the first main result for bounded digits.

\begin{theorem}[Lower Bound for Bounded Digits] \label{thm:improved-lower}
For any integer $N \geq 1$ and any positive integer $M > 2N+1$, the Hausdorff dimension satisfies
\[
\dimH(E_M) \geq 1 - \frac{2(N+1)}{M+1}\frac{1}{\log(N+1)}.
\]
\end{theorem}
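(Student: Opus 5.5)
The plan is to apply Lemma~\ref{lem:mass-distribution}. Put $s = 1 - \frac{2(N+1)}{(M+1)\log(N+1)}$. Since $M > 2N+1$ we have $s \in (0,1)$, because $2(N+1) < (M+1)\log(N+1)$ holds once $M+1 > 2(N+1)/\log(N+1)$; the case $N=1$ needs a separate check, and there the bound may simply be trivial if $s\le 0$. It then suffices to verify \eqref{4.1}, i.e.
\[
\sum_{k=N}^{M}\left(\frac{|I_n(\varepsilon_1,\ldots,\varepsilon_{n-1},k)|}{|I_{n-1}(\varepsilon_1,\ldots,\varepsilon_{n-1})|}\right)^s \ge 1 .
\]
For this I will not use the crude ratio bound \eqref{3.3}. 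Instead I take the exact ratio obtained in the proof of Proposition~\ref{prop:interval-length}(iii), written in terms of $r = q_{n-2}/q_{n-1}\in[0,1/N]$:
\[
\frac{|I_n(\ldots,k)|}{|I_{n-1}|} = \frac{N(1+r)}{(k+Nr)(k+1+Nr)} .
\]

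First I compare with the full sum. The intervals $I_n(\ldots,k)$ for $k\ge N$ partition $I_{n-1}$, so the ratios sum to $1$ over $k\ge N$. Each ratio is at most $1$ and $s<1$, so $x^s \ge x$ for every ratio $x$. Splitting the sum,
\[
\sum_{k=N}^{M} x_k^s = \sum_{k=N}^{M} x_k\, x_k^{s-1} \ge \min_{N\le k\le M} x_k^{\,s-1}\sum_{k=N}^{M}x_k = x_M^{\,s-1}\Bigl(1-\sum_{k>M}x_k\Bigr).
\]
The tail telescopes. With $u = Nr$,
\[
\sum_{k>M}\frac{N(1+r)}{(k+u)(k+1+u)} = \frac{N(1+r)}{M+1+u}\le \frac{N+1}{M+1}.
\]
Also $x_M \le \frac{N+1}{M(M+1)}$. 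It therefore suffices to show
\[
x_M^{-(1-s)}\Bigl(1-\tfrac{N+1}{M+1}\Bigr)\ge 1,
\]
that is,
\[
(1-s)\log\frac{1}{x_M} \ge -\log\Bigl(1-\tfrac{N+1}{M+1}\Bigr).
\]

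This last inequality is the main obstacle. The right side is $\le \frac{2(N+1)}{M+1}$ when $\frac{N+1}{M+1}\le \frac12$, which is exactly where the hypothesis $M>2N+1$ enters, via $-\log(1-t)\le 2t$ for $t\le 1/2$. The left side equals $\frac{2(N+1)}{(M+1)\log(N+1)}\log\frac{1}{x_M}$. So it reduces to $\log(1/x_M)\ge\log(N+1)$, i.e.\ $x_M\le \frac1{N+1}$. This follows from $x_M\le \frac{N+1}{M(M+1)}$ together with $M(M+1)\ge (N+1)^2$, which holds since $M>2N+1$.

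The delicate points are the direction of the minimum of $x_k^{s-1}$, which is attained at the smallest ratio $x_M$ since $s-1<0$, and the exact value of the tail. If the tail estimate turns out to be off, I would fall back on the integral comparison $\sum_{k>M}\frac{2N}{k^2}\le \frac{2N}{M}$ from \eqref{3.3}, adjusting constants accordingly.
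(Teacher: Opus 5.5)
Most of your setup is correct: the exact ratio $x_k=\frac{N(1+r)}{(k+Nr)(k+1+Nr)}$, the identity $\sum_{k\ge N}x_k=1$, the tail $\frac{N(1+r)}{M+1+Nr}\le\frac{N+1}{M+1}$, and the use of $-\log(1-t)\le 2t$ for $t=\frac{N+1}{M+1}<\frac12$. The gap is in the key step
\[
\sum_{k=N}^{M} x_k\, x_k^{s-1} \ge \min_{N\le k\le M} x_k^{\,s-1}\sum_{k=N}^{M}x_k = x_M^{\,s-1}\Bigl(1-\sum_{k>M}x_k\Bigr),
\]
which has the extremum backwards. Since $1-s>0$, the map $t\mapsto t^{s-1}$ is decreasing, and $x_k$ is strictly decreasing in $k$. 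So $x_k^{s-1}$ is increasing in $k$, and $x_M^{s-1}$ is the \emph{maximum} of these factors. Indeed $x_k^{s-1}<x_M^{s-1}$ for every $k<M$, so $\sum_{k=N}^M x_k^s< x_M^{s-1}\sum_{k=N}^M x_k$. The inequality you rely on is therefore strictly false, not merely unjustified, and the final check $x_M\le 1/(N+1)$, though true, proves nothing. There is also a warning sign: $\log(1/x_M)\approx 2\log M$, so if your inequality held it would give a deficiency $1-\dimH(E_M)$ of order $(N+1)/(M\log M)$. For $N=1$ that contradicts the known fact that this deficiency is of exact order $1/M$.

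The repair is one line, and with it your argument becomes the paper's proof. The minimum of $x_k^{s-1}$ is attained at $k=N$, where
\[
x_N=\frac{N(1+r)}{(N+Nr)(N+1+Nr)}=\frac{1}{N+1+Nr},\qquad x_N^{\,s-1}=(N+1+Nr)^{1-s}\ge (N+1)^{1-s}.
\]
Together with your tail bound this gives $\sum_{k=N}^M x_k^s\ge (N+1)^{1-s}\bigl(1-\frac{N+1}{M+1}\bigr)$. That is precisely the paper's sufficient condition \eqref{4.005}, followed by its bound on the bracket. Your last paragraph then goes through verbatim, with $\log(1/x_N)\ge\log(N+1)$ replacing the claim about $x_M$. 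This is where the $\log(N+1)$ in the statement comes from: it measures the largest child ratio $x_N\le 1/(N+1)$, not the smallest.

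The paper does the same computation in terms of the $q$'s. It bounds $(kq_{n-1}+Nq_{n-2})^{1-s}((k+1)q_{n-1}+Nq_{n-2})^{1-s}$ from below by its value at $k=N$, then telescopes as in \eqref{4.3}. Your normalized ratios and ``partition minus tail'' identity are an equivalent, slightly cleaner bookkeeping of that step.
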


\begin{proof}
We apply the mass distribution principle (Lemma \ref{lem:mass-distribution}) with 
\[
s=1 - \frac{2(N+1)}{M+1}\frac{1}{\log(N+1)}.
\]

We need to verify that for all $n \geq 1$ and all admissible sequences $(\varepsilon_1, \ldots, \varepsilon_{n-1}) \in \{N, \ldots, M\}^{n-1}$ equation \eqref{4.1} is satisfied. 
Using Proposition \ref{prop:interval-length}, substituting the length formula \eqref{eq:interval-length} into \eqref{4.1}, we obtain:
\[
\frac{N^{s(n-1)}}{q_{n-1}^{s}\left(q_{n-1} + {q_{n-2}}\right)^s} \leq \sum_{k=N}^{M} \frac{N^{sn}}{q_n^{s} \left(q_n + {q_{n-1}}\right)^s}.
\]

Since $q_n = k q_{n-1} + N q_{n-2}$, we obtain:
\begin{align}
&\sum_{k=N}^{M} \frac{1}{\left(kq_{n-1} + {Nq_{n-2}}\right)\left((k+1)q_{n-1} + {Nq_{n-2}}\right)} \nonumber \\
&= 
\sum_{k=N}^{M} \frac{1}{q_{n-1}}\left[ \frac{1}{kq_{n-1} + {Nq_{n-2}}} - \frac{1}{(k+1)q_{n-1} + {Nq_{n-2}}} \right] \nonumber 
\\
&=  \frac{1}{q_{n-1}}\left[ \frac{1}{Nq_{n-1} + {Nq_{n-2}}} - \frac{1}{(M+1)q_{n-1} + {Nq_{n-2}}} \right]. 
\label{4.3}
\end{align}

A direct computation based on (\ref{4.3}) in the second equality below shows that 
\begin{align*}
& \sum_{k=N}^{M} \frac{1}{\left(kq_{n-1} + {Nq_{n-2}}\right)^s\left((k+1)q_{n-1} + {Nq_{n-2}}\right)^s} \\
&=
\sum_{k=N}^{M} \frac{\left(kq_{n-1} + {Nq_{n-2}}\right)^{1-s}\left((k+1)q_{n-1} + {Nq_{n-2}}\right)^{1-s}}{\left(kq_{n-1} + {Nq_{n-2}}\right)\left((k+1)q_{n-1} + {Nq_{n-2}}\right)} \\
& \ge 
\sum_{k=N}^{M} \frac{\left(Nq_{n-1} + {Nq_{n-2}}\right)^{1-s}\left((N+1)q_{n-1} + {Nq_{n-2}}\right)^{1-s}}{\left(kq_{n-1} + {Nq_{n-2}}\right)\left((k+1)q_{n-1} + {Nq_{n-2}}\right)} \\
& \ge 
\sum_{k=N}^{M} \frac{N^{1-s} \left(q_{n-1} + {q_{n-2}}\right)^{1-s} (N+1)^{1-s}q_{n-1}^{1-s} }{\left(kq_{n-1} + {Nq_{n-2}}\right)\left((k+1)q_{n-1} + {Nq_{n-2}}\right)} \\
&=\frac{N^{1-s} (N+1)^{1-s} \left(q_{n-1} + {q_{n-2}}\right)^{1-s} q_{n-1}^{1-s} }{q_{n-1}} \left[ \frac{1}{Nq_{n-1} + {Nq_{n-2}}} - \frac{1}{(M+1)q_{n-1} + {Nq_{n-2}}} \right] \\
& = \frac{N^{1-s} (N+1)^{1-s} \left(q_{n-1} + {q_{n-2}}\right)^{1-s} q_{n-1}^{1-s} }{q_{n-1}} \frac{1}{N(q_{n-1}+q_{n-2})} \left[ 1 - \frac{N(q_{n-1}+q_{n-2})}{(M+1)q_{n-1} + {Nq_{n-2}}} \right] \\
& = \frac{ (N+1)^{1-s}}{N^{s} q^s_{n-1}\left(q_{n-1} + {q_{n-2}}\right)^{s}} 
\left[ 1 - \frac{N(q_{n-1}+q_{n-2})}{(M+1)q_{n-1} + {Nq_{n-2}}} \right]. 
\end{align*}
In order to derive \eqref{4.1} it suffices to verify that  
\begin{equation} \label{4.005}
(N+1)^{1-s} \left[ 1 - \frac{N(q_{n-1}+q_{n-2})}{(M+1)q_{n-1} + {Nq_{n-2}}} \right] \ge 1.     
\end{equation}
Since $q_{n-2} \le \frac{1}{N} q_{n-1}$ we obtain: 
\[
\frac{N(q_{n-1}+q_{n-2})}{(M+1)q_{n-1} + {Nq_{n-2}}} \le \frac{Nq_{n-1}+q_{n-1}}{(M+1)q_{n-1}} \le \frac{N+1}{M+1}
\]
and
\[
(N+1)^{1-s} \left[ 1 - \frac{N(q_{n-1}+q_{n-2})}{(M+1)q_{n-1} + {Nq_{n-2}}} \right] \ge (N+1)^{1-s} \left( 1 - \frac{N+1}{M+1}\right).
\]
Clearly we have 
\[
(N+1)^{1-s} \left( 1 - \frac{N+1}{M+1}\right) \ge 1
\]
if and only if 
\[
(1-s) \log (N+1) \ge - \log\left( 1 - \frac{N+1}{M+1}\right). 
\]
{Since $2x \ge -\log(1-x)$ for all $x \in (0, x_0)$ with $x_0 \approx 0.7968$ (in particular for $x \le 3/4$), we may set $x = \frac{N+1}{M+1}$. To apply the inequality we need $\frac{N+1}{M+1} \le \frac{3}{4}$, i.e., $M \ge \frac{4N+1}{3}$. This condition is weaker than $M > 2N+1$ for all $N \ge 1$, so the theorem's hypothesis $M > 2N+1$ certainly suffices.
With this,} we set
\[
(1-s) \log (N+1) = 2 \frac{N+1}{M+1},
\]
which yields
\[
s = 1 - \frac{2(N+1)}{M+1} \frac{1}{\log(N+1)}.
\]
Under the constraint $M > 2N+1$, this choice satisfies \eqref{4.005}.
\end{proof}

\begin{theorem}[Upper Bound for Bounded Digits] \label{thm:improved-upper}
For any integer $M > N$, the Hausdorff dimension satisfies
\[
\dimH(E_M) \leq 1 - \frac{N}{(M + 1)\log \left( \frac{(M+1)^2}{N} \right)}.
\]
\end{theorem}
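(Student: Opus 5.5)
The plan is to apply the covering criterion, Lemma~\ref{lem:covering}, with
\[
s = 1 - \frac{N}{(M+1)\log\bigl((M+1)^2/N\bigr)} .
\]
We must check inequality \eqref{4.2} for every admissible prefix. Write $q=q_{n-1}$ and $q'=q_{n-2}$, and put $a_k = kq+Nq'$, so that $q_n+q_{n-1} = a_{k+1}$. By \eqref{eq:interval-length}, inequality \eqref{4.2} is equivalent to
\[
\sum_{k=N}^{M} \frac{N^{s}}{a_k^s a_{k+1}^s} \le \frac{1}{q^s (q+q')^s} .
\]
This mirrors the telescoping argument in the proof of Theorem~\ref{thm:improved-lower}, with the inequalities reversed.

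First I rewrite each term as $a_k^{-s}a_{k+1}^{-s} = (a_ka_{k+1})^{1-s}/(a_ka_{k+1})$. For the upper bound I control the factor $(a_ka_{k+1})^{1-s}$ from above by its value at $k=M$. Using $q'\le q/N$, this gives
\[
a_k a_{k+1} \le a_Ma_{M+1} \le (M+1)(M+2)q^2 \le c\,(M+1)^2 q^2 .
\]
The sum then becomes at most $(c(M+1)^2q^2)^{1-s}$ times the telescoping sum \eqref{4.3}, which is
\[
\frac{1}{q}\Bigl[\frac{1}{N(q+q')} - \frac{1}{(M+1)q+Nq'}\Bigr] = \frac{1}{Nq(q+q')}\Bigl[1 - \frac{N(q+q')}{(M+1)q+Nq'}\Bigr].
\]
Since $q'\ge 0$, the bracket is at most $1 - N/(M+1)$. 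Collecting powers, with $N^s\cdot N^{-1}=N^{s-1}$ and $(q^2)^{1-s}/(q(q+q'))$ compared against $q^{-s}(q+q')^{-s}$ using $q\le q+q'\le (1+1/N)q$, the required inequality reduces to a condition of the form
\[
\Bigl(\tfrac{C(M+1)^2}{N}\Bigr)^{1-s}\Bigl(1-\tfrac{N}{M+1}\Bigr)\le 1 .
\]

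Taking logarithms and using $-\log(1-x)\ge x$, it suffices that
\[
(1-s)\log\bigl(C(M+1)^2/N\bigr) \le \frac{N}{M+1}.
\]
With the chosen $s$ this holds exactly when $C=1$.

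The main obstacle is making the constant $C$ equal to $1$, since the crude bound $(M+2)$ and the comparison of $q+q'$ with $q$ each lose constants. Here is what I would try first. Pair the weight $a_k^{1-s}a_{k+1}^{1-s}$ more carefully: bound it by $(a_{M+1})^{2(1-s)}$ and note that $a_{M+1}\le (M+1)q + q'$. Also keep $q+q'$ explicitly rather than discarding it, so the left side carries $(q(q+q'))^{-s}$ exactly. The ratio $(a_{M+1}^2/(q(q+q')))^{1-s}$ is then at most $((M+1)^2)^{1-s}$, because $a_{M+1}^2\le (M+1)^2 q(q+q')$ once $q'\le q/N$ and $M\ge N$; this is a quadratic check.

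The leftover factor $N^{s-1}=(1/N)^{1-s}$ supplies the $/N$ inside the logarithm. If a stray constant still survives, I would absorb it using the slack in $-\log(1-x)\ge x+x^2/2$.
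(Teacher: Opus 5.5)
Your skeleton matches the paper's: the covering lemma with this $s$, the rewriting $a_k^{-s}a_{k+1}^{-s}=(a_ka_{k+1})^{1-s}/(a_ka_{k+1})$, bounding the weight by its value at $k=M$, the telescoping identity \eqref{4.3}, the bracket bound $1-N/(M+1)$, and $-\log(1-x)\ge x$. You also correctly see that with this $s$ everything hinges on getting $C=1$. \textbf{The step you propose for that is false.}

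\textbf{Why the proposed step fails.} Since $a_{M+1}=(M+1)q+Nq'$, the claim $a_{M+1}\le (M+1)q+q'$ fails whenever $N\ge 2$ and $q'>0$. The ``quadratic check'' also fails, because
$(M+1)^2q(q+q')-a_{M+1}^2=q'\bigl[(M+1)(M+1-2N)q-N^2q'\bigr]$,
which is negative whenever $N<M\le 2N-1$ and $q'>0$. For example, $N=2$, $M=3$ and prefix $\varepsilon_1=2$ give $q=2$, $q'=1$, and then $a_4^2=100>96=(M+1)^2q(q+q')$. Your inequality does hold for $M\ge 2N$, so it would cover the range $M>2N+1$ of Theorem~\ref{thm:main-bounded}, but not the stated range $M>N$. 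Passing from $a_Ma_{M+1}$ to the larger $a_{M+1}^2$ discards exactly the room you need.

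\textbf{Why the fallback does not close the gap.} Absorbing ``a stray constant'' via $-\log(1-x)\ge x+x^2/2$ is not carried out. It also cannot absorb a constant independent of $M$, such as the factor $1+1/N$ you attribute to comparing $q+q'$ with $q$. The available room for $\log C$ is only $\tfrac{N}{2(M+1)}\log\tfrac{(M+1)^2}{N}$, which tends to $0$ as $M\to\infty$.

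\textbf{The missing step.} Keep the product $a_Ma_{M+1}$, which you already have after bounding each weight by its $k=M$ value, and prove $a_Ma_{M+1}\le (M+1)^2q(q+q')$; this is the inequality the paper uses. Normalize $q=1$ and $q'=r\in[0,1/N]$. The difference
$(M+1)^2(1+r)-(M+Nr)(M+1+Nr)=(M+1)+\bigl[(M+1)^2-N(2M+1)\bigr]r-N^2r^2$
is concave in $r$. It equals $M+1$ at $r=0$ and $(M+1)(M+1-N)/N\ge 0$ at $r=1/N$, so it is nonnegative for all $M\ge N$. This gives $C=1$ directly.

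\textbf{An alternative repair.} The comparison of $q^2$ with $q(q+q')$ actually costs nothing, since it enters as $(q/(q+q'))^{1-s}\le 1$. So your crude route leaves only $C=(M+2)/(M+1)$. Then $\log C\le 1/(M+1)$ is absorbed by the $x^2/2$ slack, because $N\log\frac{(M+1)^2}{N}\ge \log 8>2$ for $M\ge N+1$. Either way, one of these verifications must be supplied for the argument to cover the full range $M>N$.
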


\begin{proof}
We verify condition (\ref{4.2}) of Lemma \ref{lem:covering}. Using the length formula \eqref{eq:interval-length}, condition (\ref{4.2}) becomes
\begin{equation}
\frac{1}{q_{n-1}^s(q_{n-1}+q_{n-2})^s} \ge N^s \sum_{k=N}^{M} \frac{1}{q_n^s(q_n+q_{n-1})^s}. \label{4.09}
\end{equation}

{
From the telescoping sum derived in the lower bound proof (see \eqref{4.3}), we have
\begin{equation} \label{4.6}
\sum_{k=N}^{M} \frac{1}{q_n(q_n+q_{n-1})} = \frac{1}{Nq_{n-1}(q_{n-1}+q_{n-2})} \left(1 - \frac{N(q_{n-1}+q_{n-2})}{(M+1)q_{n-1}+Nq_{n-2}}\right).
\end{equation}
For each term in the sum, we note that
\[
\frac{1}{q_n^s(q_n+q_{n-1})^s} = \frac{(kq_{n-1}+Nq_{n-2})^{1-s}((k+1)q_{n-1}+Nq_{n-2})^{1-s}}{q_n(q_n+q_{n-1})}.
\]
Since $k \le M$, we have:
\begin{align*}
&(kq_{n-1}+Nq_{n-2})^{1-s} \le 
(Mq_{n-1}+Nq_{n-2})^{1-s} \\
&((k+1)q_{n-1}+Nq_{n-2})^{1-s}
\le ((M+1)q_{n-1}+Nq_{n-2})^{1-s}. 
\end{align*}
Using $q_{n-2} \le \dfrac{q_{n-1}}{N}$ (Lemma \ref{lemma:q-growth}), we obtain
\[
(Mq_{n-1}+Nq_{n-2})^{1-s}((M+1)q_{n-1}+Nq_{n-2})^{1-s} \le (M+1)^{2(1-s)} q_{n-1}^{1-s}(q_{n-1}+q_{n-2})^{1-s}.
\]
Combining these estimates with \eqref{4.6} yields
\[
\sum_{k=N}^{M} \frac{1}{q_n^s(q_n+q_{n-1})^s} \le \frac{(M+1)^{2(1-s)} q_{n-1}^{1-s}(q_{n-1}+q_{n-2})^{1-s}}{Nq_{n-1}(q_{n-1}+q_{n-2})} \left(1 - \frac{N}{M+1}\right),
\]
where we used the obvious bound $\dfrac{N(q_{n-1}+q_{n-2})}{(M+1)q_{n-1}+Nq_{n-2}} \ge \dfrac{N}{M+1}$.
Substituting this into \eqref{4.09} and simplifying, we obtain the condition
\[
\frac{(M+1)^{2(1-s)}}{N^{1-s}}\left(1 - \frac{N}{M+1}\right) \le 1.
\]
Taking logarithms, this becomes
\[
(1-s)\log\frac{(M+1)^2}{N} \le -\log\left(1 - \frac{N}{M+1}\right).
\]
Using the inequality $x \le -\log(1-x)$ for $x \in (0,1)$ with $x = \frac{N}{M+1}$, we obtain a sufficient condition:
\[
(1-s)\log\frac{(M+1)^2}{N} \le \frac{N}{M+1}.
\]
Solving for $s$ yields
\[
s \ge 1 - \frac{N}{(M+1)\log\frac{(M+1)^2}{N}}.
\]
Since this holds for all admissible sequences, the covering argument (Lemma \ref{lem:covering}) implies
\[
\dimH(E_M) \le 1 - \frac{N}{(M+1)\log\frac{(M+1)^2}{N}}.
\]
}
\end{proof}

\begin{corollary}[Proof of Theorem \ref{thm:main-bounded}]
The combination of Theorem~\ref{thm:improved-lower} and Theorem~\ref{thm:improved-upper} 
yields exactly Theorem~\ref{thm:main-bounded}.
\end{corollary}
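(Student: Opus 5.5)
The plan is to obtain Theorem~\ref{thm:main-bounded} by placing the two one-sided estimates side by side. Theorem~\ref{thm:improved-lower} gives the lower bound, and Theorem~\ref{thm:improved-upper} gives the upper bound. The first task is to check that the hypotheses line up. Theorem~\ref{thm:improved-lower} needs $M > 2N+1$, which is exactly the hypothesis of Theorem~\ref{thm:main-bounded}. Theorem~\ref{thm:improved-upper} needs only $M > N$, and this is implied by $M > 2N+1$. So for every integer $M > 2N+1$ both results apply to the same set $E_M$ defined in \eqref{def:EM}, and chaining the two inequalities gives the stated two-sided estimate.

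For completeness I would then confirm that the resulting interval is nonempty and lies in the meaningful range. The upper bound is strictly less than $1$, since $(M+1)^2/N > 1$ makes the logarithm positive.

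The lower bound needs slightly more care. The quantity $\frac{2(N+1)}{(M+1)\log(N+1)}$ can exceed $1$ for small $N$; for instance, $N=1$ and $M=4$ give $4/(5\log 2) > 1$. In that case the lower bound is negative and trivially true. Nonnegativity is not needed for the inequality chain, because Lemma~\ref{lem:mass-distribution} is only invoked when $s \in (0,1)$, and otherwise $\dimH(E_M) \ge 0$ suffices.

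I would point out this edge case explicitly. Strictly speaking, the proof of Theorem~\ref{thm:improved-lower} applies the mass distribution lemma with that $s$, so for parameters where $s \le 0$ the claimed bound should instead be read as following from nonnegativity of Hausdorff dimension. No other obstacle is expected, since this step is purely a bookkeeping combination of results already proved.
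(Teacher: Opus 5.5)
Your proposal is correct and matches the paper, which obtains Theorem~\ref{thm:main-bounded} by placing Theorems~\ref{thm:improved-lower} and~\ref{thm:improved-upper} side by side, since the hypothesis $M>2N+1$ also implies $M>N$. Your edge-case remark is also accurate: for $N=1$, $M=4$ the lower bound $1-\frac{4}{5\log 2}$ is negative, so Lemma~\ref{lem:mass-distribution} (stated for $s\in(0,1)$) does not literally apply there, and the inequality holds only trivially because $\dimH(E_M)\ge 0$ --- a point the paper leaves implicit.
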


\section{Sets with Growing Digits and Good-type Theorems}

The proof of Theorem~\ref{thm:Good2N} follows the same two-part structure 
as Theorem~\ref{thm:main-bounded}: we establish separate upper and lower bounds, 
then combine them. Unlike the bounded case where both bounds use similar 
techniques, here the upper and lower bounds require different approaches 
adapted to the infinite digit range.

\begin{theorem}[Upper Bound for Growing Digits] \label{upperbound:Good2N}
Let $N \geq 1$ be an integer. For every real $\alpha$ with $\log(\alpha-1) > \exp(1+N)$, we have
\begin{equation}\label{eq:upper-ineq}
\dimH(F_{\alpha,N}) 
\;\le\; 
\frac12 + \frac{\log\log\!\bigl({\alpha}-1\bigr)}{2\log\!\bigl({\alpha}-1\bigr)}.
\end{equation}
\end{theorem}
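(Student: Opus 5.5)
We use the natural covering of $F_{\alpha,N}$ by fundamental intervals whose digits are all at least $\alpha$, and run a one-step recursive estimate in the spirit of Lemma~\ref{lem:covering}. Since the digit range is now infinite, we cannot cover by the intervals $I_n$ themselves in a naive way. Instead we cover by the sets
\[
J_n(\varepsilon_1,\dots,\varepsilon_n)=\bigcup_{k\ge \alpha} I_{n+1}(\varepsilon_1,\dots,\varepsilon_n,k), \qquad \varepsilon_i\ge\alpha .
\]
Each $J_n$ is an interval with an endpoint at $p_n/q_n$ and the other at the point corresponding to $k=\lceil\alpha\rceil$. Its length is
\[
|J_n| = \frac{N^{n+1}}{q_n(\lceil\alpha\rceil q_n+Nq_{n-1})}\le \frac{N^{n+1}}{\alpha q_n^2}.
\]
By the monotonicity argument of Lemma~\ref{lem:covering}, it suffices to show, for the chosen $s$, that
\begin{equation*}
\sum_{k\ge\alpha} |J_{n+1}(\varepsilon_1,\dots,\varepsilon_n,k)|^s \le |J_n(\varepsilon_1,\dots,\varepsilon_n)|^s
\end{equation*}
for all admissible prefixes. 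The sums $\sum|J_n|^s$ are then nonincreasing in $n$, and their diameters tend to $0$ by Proposition~\ref{prop:interval-length}. Hence $\mathcal H^s(F_{\alpha,N})<\infty$, which gives the bound.

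To verify this inequality, write $q_{n+1}=kq_n+Nq_{n-1}=q_n(k+Nr)$ with $r=q_{n-1}/q_n\in[0,1/N]$. Up to the common factor $N^{(n+1)s}q_n^{-2s}$, the left-hand side becomes $\sum_{k\ge\alpha}N^s (k+Nr)^{-2s}(\alpha+N r')^{-s}$. Here $r'=1/(k+Nr)$, and the factor $(\ldots)^{-s}$ from the $\lceil\alpha\rceil q_{n+1}+Nq_n$ term is bounded above by $\alpha^{-s}$. The right-hand side is $(\alpha+Nr)^{-s}$, up to ceiling effects, which is at least $(\alpha+1)^{-s}$.

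We compare the sum with an integral: $\sum_{k\ge\alpha}k^{-2s}\le\int_{\alpha-1}^\infty x^{-2s}\,dx=\frac{(\alpha-1)^{1-2s}}{2s-1}$. The sufficient condition then reduces, after absorbing the constants coming from $N$ and the shifts, to
\[
(2s-1)(\alpha-1)^{2s-1}\ge 1+N,
\]
which is the condition mentioned in the Remark. The bookkeeping of $N^s$, $\alpha^{-s}$ versus $(\alpha+1)^{-s}$, and $\lceil\alpha\rceil$ is where the factor $1+N$ should come out. The ratio $((\alpha+1)/\alpha)^s\le 2$ and $N^s\le N$ are crude but should suffice, possibly at the cost of replacing $1+N$ by a slightly different constant.

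It remains to plug in $s=\frac12+\frac{\log\log(\alpha-1)}{2\log(\alpha-1)}$. Set $L=\log(\alpha-1)$. Then $2s-1=\log L/L$ and $(\alpha-1)^{2s-1}=e^{\log L}=L$, so the left side equals $\log L$. The required inequality becomes $\log\log(\alpha-1)\ge 1+N$, i.e.\ $\log(\alpha-1)\ge\exp(1+N)$, which is exactly the hypothesis. Since the left side $(2s-1)(\alpha-1)^{2s-1}$ is increasing in $s$, any larger $s$ also works.

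The main obstacle is the constant-tracking in the second step. We must make sure the various $N$-dependent factors really collapse to $1+N$ and not to something larger such as $2N$ or $3N$, which would break the clean hypothesis. If the constants do not match, we would first try the exact length formula \eqref{eq:interval-length} together with the telescoping identity \eqref{4.3}, used in place of crude bounds. Specifically, we would use $q_{n+1}(q_{n+1}+q_n)\ge q_n^2 k(k+1)$ and the estimate $\sum_{k\ge\alpha}(k(k+1))^{-s}\le\int_{\alpha-1}^\infty x^{-2s}\,dx$.
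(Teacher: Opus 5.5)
\textbf{Gap: the constant step is left open.} Your route is viable, but it stops exactly at the step that decides whether the stated hypothesis is enough. After your reduction, with $m=\lceil\alpha\rceil$, $r=q_{n-1}/q_n$ and $r'=q_n/q_{n+1}$, you must show
\[
N^s\sum_{k\ge m}\frac{(m+Nr)^s}{(k+Nr)^{2s}(m+Nr')^s}\le 1 .
\]
Whatever constant $C$ you obtain in $(2s-1)(\alpha-1)^{2s-1}\ge C$ becomes, after your substitution $s=\frac12+\frac{\log L}{2L}$, the requirement $\log\log(\alpha-1)\ge C$.

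The crude bound you suggest, $((\alpha+1)/\alpha)^s\le 2$, gives $C=2N$. For $N\ge 2$ this is strictly stronger than the hypothesis $\log\log(\alpha-1)>1+N$. So the proposal as written does not prove the theorem, and accepting ``a slightly different constant'' is not an option here.

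The fix is one line:
\begin{itemize}
\item Use $Nr\le 1$ (Lemma~\ref{lemma:q-growth}(i)) and $r'\ge 0$ to get $(m+Nr)/(m+Nr')\le 1+1/m$.
\item Since $s\le 1$, you have $N^s(1+1/m)^s\le N(1+1/m)=N+N/m$.
\item The hypothesis gives $m\ge\alpha>1+e^{e^{1+N}}>N$, so $N+N/m<N+1$.
\end{itemize}
Combine this with $\sum_{k\ge m}(k+Nr)^{-2s}\le\sum_{k\ge m}k^{-2s}\le(\alpha-1)^{1-2s}/(2s-1)$. This justifies your condition $(2s-1)(\alpha-1)^{2s-1}\ge 1+N$. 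The rest of your argument then goes through: the substitution producing $\log L$, the monotone level sums, finiteness at level $0$ where $|J_0|=N/m$, and diameters tending to $0$.

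\textbf{Comparison with the paper.} Your remark that one ``cannot cover by the intervals $I_n$ themselves'' is mistaken, and the paper does exactly that. It covers $F_{\alpha,N}$ by the countable family of $I_n$ with all digits $\ge\alpha$. The level sums are finite for $s>\frac12$, and they are nonincreasing as soon as $\sum_{k\ge\alpha}|I_{n+1}^{(k)}|^s\le|I_n|^s$. That inequality follows from two bounds:
\begin{itemize}
\item $|I_{n+1}^{(k)}|\le N^{n+1}/(k^2q_n^2)$, using $q_{n+1}\ge kq_n$;
\item $|I_n|\ge N^{n+1}/((1+N)q_n^2)$, from Proposition~\ref{prop:interval-length}(ii).
\end{itemize}
So the factor $(1+N)^s\le 1+N$ appears directly, with no ceiling effects and no need to compute $|J_n|$. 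This is close to the fallback you sketch at the end. Your $J_n$ cover (Good's original device) actually yields the slightly better constant $N(1+1/m)<1+N$. That gain does not change the explicit bound, and it costs the extra bookkeeping you were unsure about.
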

\begin{proof} 
Let $s > \tfrac12$ and for each $n \geq 1$ consider the family
\[
\mathcal{C}_n := \{ \I(\varepsilon_1, \dots, \varepsilon_n) : \varepsilon_i \geq \alpha,\; 1 \leq i \leq n \}.
\]
By Lemma~\ref{lemma:q-growth} and the upper bound in \eqref{eq:interval-bounds}, 
the diameters of intervals in $\mathcal{C}_n$ tend to $0$ as $n \to \infty$. Hence $\mathcal{C}_n$ is a cover of $F_{\alpha,N}$.

For a fixed $I_n = \I(\varepsilon_1,\dots,\varepsilon_n) \in \mathcal{C}_n$, its children are 
$I_{n+1}^{(k)} = \I(\varepsilon_1,\dots,\varepsilon_n,k)$ with $k \geq \alpha$. 
Using $q_{n+1} \geq k q_n$ and the upper bound in \eqref{eq:interval-bounds},
\begin{equation}
\sum_{k=\alpha}^{\infty} |I_{n+1}^{(k)}|^s 
   \leq \sum_{k=\alpha}^{\infty} \left( \frac{N^{\,n+1}}{k^2 q_n^{2}} \right)^s
   = \frac{N^{(n+1)s}}{q_n^{2s}} \sum_{k=\alpha}^{\infty} k^{-2s}.
\label{eq:sum-children}
\end{equation}

On the other hand, the lower bound in \eqref{eq:interval-bounds} gives
\begin{equation}\label{eq:parent-bound}
|I_n|^s \geq \left( \frac{N^{\,n+1}}{(1+N) q_n^{2}} \right)^s 
        = \frac{N^{(n+1)s}}{(1+N)^s q_n^{2s}}.
\end{equation}

From \eqref{eq:sum-children} and \eqref{eq:parent-bound} we obtain the key inequality:
\begin{equation}\label{eq:children-parent}
\sum_{k=\alpha}^{\infty} |I_{n+1}^{(k)}|^s 
\; \leq \; (1+N)^s \left( \sum_{k=\alpha}^{\infty} k^{-2s} \right) |I_n|^s .
\end{equation}

This inequality shows that the $s$-dimensional sum over all children of $I_n$ is bounded by 
$(1+N)^s \left( \sum_{k=\alpha}^{\infty} k^{-2s} \right)$ times the $s$-dimensional size of the parent interval.

If we have
\begin{equation}\label{eq:key-condition}
\sum_{k=\alpha}^{\infty} k^{-2s} \leq (1+N)^{-s},
\end{equation}
then from \eqref{eq:children-parent} we obtain for every $I_n \in \mathcal{C}_n$,
\begin{equation}\label{eq:per-parent}
\sum_{k=\alpha}^{\infty} |I_{n+1}^{(k)}|^s \leq |I_n|^s .
\end{equation}

We now sum inequality \eqref{eq:per-parent} over all parent intervals $I_n \in \mathcal{C}_n$. 
On the left-hand side, we obtain a double sum:
\[
\sum_{I_n \in \mathcal{C}_n} \sum_{k=\alpha}^{\infty} |I_{n+1}^{(k)}|^s .
\]

Observe that each interval $I_{n+1} \in \mathcal{C}_{n+1}$ corresponds to a unique pair 
$(I_n, k)$ where $I_n \in \mathcal{C}_n$ is the parent interval and $k \geq \alpha$ is the 
$(n+1)$-th digit. This is because each fundamental interval of order $n+1$ has a unique 
representation as $I_N(\varepsilon_1, \dots, \varepsilon_n, k)$. 

Therefore, the double sum over all parents $I_n$ and all digits $k \geq \alpha$ is exactly 
the sum over all children intervals $I_{n+1} \in \mathcal{C}_{n+1}$:
\[
\sum_{I_n \in \mathcal{C}_n} \sum_{k=\alpha}^{\infty} |I_{n+1}^{(k)}|^s 
= \sum_{I_{n+1} \in \mathcal{C}_{n+1}} |I_{n+1}|^s .
\]

Summing the right-hand side of \eqref{eq:per-parent} over all $I_n \in \mathcal{C}_n$ gives 
simply $\sum_{I_n \in \mathcal{C}_n} |I_n|^s$.

Consequently, from \eqref{eq:per-parent} we obtain
\begin{equation}\label{eq:level-sum}
\sum_{I_{n+1} \in \mathcal{C}_{n+1}} |I_{n+1}|^s \leq \sum_{I_n \in \mathcal{C}_n} |I_n|^s .
\end{equation}

Define $a_n := \sum_{I \in \mathcal{C}_n} |I|^s$. Inequality \eqref{eq:level-sum} tells us that 
the sequence $(a_n)_{n\geq 1}$ is non‑increasing. In particular, $a_n \leq a_1$ for every $n$.

For any $\delta > 0$, choose $n$ large enough so that $\max\{|I| : I \in \mathcal{C}_n\} < \delta$
(this is possible because the diameters of the intervals in $\mathcal{C}_n$ tend to $0$ as $n$ increases). 
Then $\mathcal{C}_n$ is a $\delta$‑cover of $F_{\alpha,N}$ and
\[
\sum_{I \in \mathcal{C}_n} |I|^s = a_n \leq a_1 .
\]
Hence $\Ha^s_\delta(F_{\alpha,N}) \leq a_1$. Since $\delta$ was arbitrary, we obtain 
$\Ha^s(F_{\alpha,N}) \leq a_1 < \infty$, and consequently $\dimH(F_{\alpha,N}) \leq s$.

To obtain an explicit $s$ satisfying \eqref{eq:key-condition}, we approximate the sum by an integral.
For $s > \tfrac12$, the function $x \mapsto x^{-2s}$ is decreasing on $(0,\infty)$, hence
\[
\sum_{k=\alpha}^{\infty} k^{-2s} \leq \int_{\alpha-1}^{\infty} x^{-2s} \, dx.
\]

Evaluating the integral:
\[
\int_{\alpha-1}^{\infty} x^{-2s} \, dx = \left[ \frac{x^{-(2s-1)}}{-(2s-1)} \right]_{\alpha-1}^{\infty} 
= \frac{(\alpha-1)^{-(2s-1)}}{2s-1}.
\]
Since  $(1+N)^{-1} \le (1+N)^{-s}$ for $s \in (\frac{1}{2}, 1]$ it follows that a sufficient condition for \eqref{eq:key-condition} is:
\[
\frac{(\alpha-1)^{-(2s-1)}}{2s-1} \leq (1+N)^{-1},
\]
which is equivalent to 
$(2s-1)(\alpha-1)^{2s-1} \ge 1+N$. 

Therefore, $\dimH F_{N,\alpha} \le s$ where
\begin{equation} \label{5.6}
(2s-1)(\alpha-1)^{2s-1} = 1+N.
\end{equation}
This equation gives a unique value for $s \in (\frac{1}{2}, 1]$, if $\alpha \ge N+2$. 

For $s \ge \frac{1}{2}$, the left-hand side is a strictly increasing function of $2s-1$ which vanishes when $s=\frac{1}{2}$ and exceeds the right-hand side when $s=1$.

Putting $2s-1=x$ and $\alpha -1 =a$, equation \eqref{5.6} becomes 
\begin{equation} \label{5.7}
    x a^x = 1+N.
\end{equation}
When $x = \dfrac{\log\log a}{\log a}$, the left-hand side of \eqref{5.7} reduces to $\log \log a$ and this is greater than $1+N$ if $a > \exp(\exp(1+N))$. 
Hence the solution of \eqref{5.7} is less than $\dfrac{\log\log a}{\log a}$ if $a > \exp(\exp(1+N))$. 
Therefore, 
\[
\dimH F_{N,\alpha} \le \frac{1}{2} + \dfrac{\log\log (\alpha-1)}{2}\log (\alpha-1)
\]
if $\log(\alpha-1) > \exp(1+N)$. 
\end{proof}

\begin{theorem}[Lower Bound for Growing Digits] \label{lowerbound:Good2N}
Let $N \geq 1$ be an integer. For every real $\alpha$ with $\log(\alpha-1) > \exp(1+N)$, we have
\begin{equation}\label{eq:ub-ineq}
\dimH(F_{\alpha,N}) \;\ge \; \frac12 + \frac{1}{2\log\!\bigl({\alpha}+2\bigr)}. 
\end{equation}
\end{theorem}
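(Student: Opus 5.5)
Since $F_{\alpha,N}$ has an infinite digit set, I would not apply Lemma~\ref{lem:mass-distribution} directly. Instead I would work with a Cantor subset obtained by truncating the digits. Fix a large integer $L$ and put
\[
F^{(L)} := \{x : \lceil\alpha\rceil \le \varepsilon_n(x) \le L \ \text{for all } n\} \subseteq F_{\alpha,N}.
\]
The plan is to build on $F^{(L)}$ the recursive measure of Lemma~\ref{lem:mass-distribution}, with $\{N,\dots,M\}$ replaced by $\{\lceil\alpha\rceil,\dots,L\}$. The proof of that lemma uses only boundedness of the digits, so it applies verbatim. It therefore suffices to verify the analogue of \eqref{4.1},
\[
\sum_{k=\lceil\alpha\rceil}^{L} |I_{n+1}(\varepsilon_1,\dots,\varepsilon_n,k)|^s \ \ge\ |I_n(\varepsilon_1,\dots,\varepsilon_n)|^s ,
\]
for $s=\tfrac12+\tfrac{1}{2\log(\alpha+2)}$, for $L$ large enough, and uniformly in the admissible prefix.

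\textbf{Step 1: reduce to a scalar inequality.} By Proposition~\ref{prop:interval-length}(iii), $|I_{n+1}(\dots,k)| \ge \frac{N}{3k^2}|I_n|$. Dividing by $|I_n|^s$, it is enough to have
\[
\Bigl(\frac{N}{3}\Bigr)^{s}\sum_{k=\lceil\alpha\rceil}^{L} k^{-2s} \ \ge\ 1 .
\]
Because $s>1/2$, the full series converges. Hence it suffices to prove the inequality with $L=\infty$ and strict inequality; a large finite $L$ then also works.

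\textbf{Step 2: estimate the tail.} Compare with an integral:
\[
\sum_{k\ge\lceil\alpha\rceil}k^{-2s}\ \ge\ \int_{\alpha+1}^\infty x^{-2s}\,dx=\frac{(\alpha+1)^{-(2s-1)}}{2s-1}.
\]
Write $t=2s-1$. The factor $(N/3)^s$ is at least $3^{-1}$, which is where the factor $N^s3^{-s}$ mentioned in the Remark enters. So the target becomes
\[
\frac{(\alpha+1)^{-t}}{t} \ \ge\ 3 ,
\]
or, allowing the slack $\alpha+2$ versus $\alpha+1$, the condition $-\log t - t\log(\alpha+2) > \log 3$.

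With $t=1/\log(\alpha+2)$ we have $t\log(\alpha+2)=1$, so the condition reads $\log\log(\alpha+2) > 1+\log 3$. This follows from the hypothesis $\log(\alpha-1)>\exp(1+N)\ge e^2$, which gives $\log\log(\alpha+2)>2\ge 1+\log 3$ up to checking the constant. Equivalently, $-\log t>1+N$, matching the Remark.

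\textbf{Step 3: conclude.} Once the inequality holds, the mass distribution lemma gives $\dimH F^{(L)}\ge s$. Monotonicity of $\dimH$ then yields \eqref{eq:ub-ineq}.

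\textbf{Main obstacle.} I expect the hardest part to be the constant bookkeeping in Step 2. The crude factor $1/3$ from Proposition~\ref{prop:interval-length}(iii) must be absorbed, and if $\log 3>1$ causes trouble it must be compensated, for instance by sharpening the ratio bound to $\frac{N}{k^2}(1+O(1/k))^{-2}$ for $k\ge\alpha$ huge. That sharpening gives a constant close to $N\ge1$ rather than $1/3$, which makes $-\log t>1$ more than sufficient. A secondary check is that the lemma's geometric constants $K$ and $C$ remain finite for the truncated set; this holds because only finitely many digits are involved.
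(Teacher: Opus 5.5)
This is essentially the paper's proof: truncate to a finite digit window above $\alpha$, apply the ratio bound $\frac{N}{3k^2}$ from Proposition~\ref{prop:interval-length}(iii) inside Lemma~\ref{lem:mass-distribution}, and bound $\sum k^{-2s}$ below by an integral. Your use of $\lceil\alpha\rceil$, $\int_{\alpha+1}^\infty$, and a strict inequality at $L=\infty$ is tidier than the paper's $\int_\alpha^\infty$ and factor-$\tfrac12$ truncation.

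The constant you hedged on does fail as written, because $1+\log 3\approx 2.10>2$. With $(N/3)^s\ge\tfrac13$ you need $\log(\alpha+2)\gtrsim 3e\approx 8.15$, while for $N=1$ the hypothesis only gives $\log(\alpha+2)>e^2\approx 7.39$, so the remedy is required. Your sharpened ratio works, since the exact formula gives $|I_{n+1}|/|I_n|\ge\frac{N}{k^2}(1+\frac2k)^{-1}$. Simply keeping $3^{-s}$ with $s<0.57$ also works, and only needs $\log(\alpha+2)\ge e\,3^{s}\approx 5.1$. The paper's own Step~5 glosses over this same point at $N=1$.
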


\begin{proof}
Let 
\[
s=\frac12+\frac{1}{2\log(\alpha+2)}.
\]
We shall construct a Cantor subset of $F_{\alpha,N}$ with Hausdorff dimension at least~$s$, which will imply the theorem.

\medskip\noindent
\textbf{Step 1.  Construction of the Cantor set.}
Choose $\beta>\alpha$ large enough (to be specified later) and consider the compact set
\[
F_{\alpha,\beta,N}:=\{x\in(0,1)\setminus\mathbb{Q}:\alpha\le\varepsilon_n(x)\le\beta\ \text{for all }n\}.
\]
Clearly $F_{\alpha,\beta,N}\subset F_{\alpha,N}$.
For every $n\ge1$ let $\mathscr{I}_n$ be the family of all fundamental intervals 
$I_N(\varepsilon_1,\dots ,\varepsilon_n)$ with $\alpha\le\varepsilon_i\le\beta$ ($1\le i\le n$).

\medskip\noindent
\textbf{Step 2. Length estimate for children intervals.}
Take an arbitrary $I_{n-1}=I_N(\varepsilon_1,\dots ,\varepsilon_{n-1})\in\mathscr{I}_{n-1}$.
For a digit $k\in[\alpha,\beta]$ let $I_n=I_N(\varepsilon_1,\dots ,\varepsilon_{n-1},k)$ be the corresponding child.
By part~(iii) of Proposition~\ref{prop:interval-length},
\[
\frac{|I_n|}{|I_{n-1}|}\ge\frac{N}{3k^2}\qquad(k\ge\alpha).
\]
Hence
\begin{equation}\label{eq:lower-length}
|I_n|^s\ge|I_{n-1}|^s\Bigl(\frac{N}{3k^2}\Bigr)^{\!s}.
\end{equation}

\medskip\noindent
\textbf{Step 3. Mass distribution condition.}
In order to apply Lemma~\ref{lem:mass-distribution} to the set $F_{\alpha,\beta,N}$
we have to verify that for every $I_{n-1}\in\mathscr{I}_{n-1}$
\[
|I_{n-1}|^s\le\sum_{k=\alpha}^{\beta}|I_n|^s .
\]
Using \eqref{eq:lower-length} we obtain a sufficient condition:
\[
1\le\sum_{k=\alpha}^{\beta}\Bigl(\frac{N}{3k^2}\Bigr)^{\!s}
      =N^{\,s}3^{-s}\sum_{k=\alpha}^{\beta}k^{-2s}.
\]

\medskip\noindent
\textbf{Step 4. Estimating the sum.}
Choose $\beta$ so large that
\[
\sum_{k=\alpha}^{\beta}k^{-2s}\ge\frac12\sum_{k=\alpha}^{\infty}k^{-2s}.
\]
Since $s>\frac12$, the series converges and such a $\beta$ exists.
For the tail we use the standard integral estimate:
\[
\sum_{k=\alpha}^{\infty}k^{-2s}\ge\int_{\alpha}^{\infty}x^{-2s}\,dx
    =\frac{\alpha^{-(2s-1)}}{2s-1}.
\]
Put $t:=2s-1=\dfrac{1}{\log(\alpha+2)}$.  Then the condition becomes
\begin{equation}\label{eq:final-cond-lower}
N^{\,s}3^{-s}\cdot\frac{\alpha^{-t}}{2t}\ge1 .
\end{equation}

\medskip\noindent
\textbf{Step 5. Verification of \eqref{eq:final-cond-lower} under the hypothesis.}
Taking logarithms in \eqref{eq:final-cond-lower} and remembering that $s=\frac{1+t}{2}$,
we need
\[
\frac{1+t}{2}\,\log\frac{N}{3}-\log2-t\log\alpha-\log t\ge0 .
\]
Because of the hypothesis $\log(\alpha-1)>\exp(1+N)$ we have
\[
t=\frac{1}{\log(\alpha+2)}<\frac{1}{\log(\alpha-1)}<e^{-(1+N)}.
\]
Consequently $-\log t>1+N$.  Moreover $t\log\alpha<2$ for all $\alpha$ large enough
(which is certainly true under the assumed condition).  Therefore
\begin{align*}
\frac{1+t}{2}\,\log\frac{N}{3}-\log2-t\log\alpha-\log t
&> \frac12\log\frac{N}{3}-\log2-2+(1+N) \\
&= \frac12\log\frac{N}{3}+N-\log2-1 .
\end{align*}
For $N\ge1$ the right‑hand side is bounded from below by a constant,
while $-\log t$ (and hence the whole left‑hand side) tends to $+\infty$ as $\alpha\to\infty$. 
As $\alpha \to \infty$, we have $t \to 0$ and $t \log \alpha \to 1$, so in particular $t \log \alpha$ is bounded.
Thus for all $\alpha$ satisfying $\log(\alpha-1)>\exp(1+N)$ inequality \eqref{eq:final-cond-lower}
holds provided $\beta$ has been chosen sufficiently large.

\medskip\noindent
\textbf{Step 6. Conclusion.}
With the chosen $\beta$ the mass distribution condition is satisfied for the family
$\{\mathscr{I}_n\}_{n\ge1}$.  By Lemma~\ref{lem:mass-distribution} we obtain
$\dimH(F_{\alpha,\beta,N})\ge s$.  Since $F_{\alpha,\beta,N}\subset F_{\alpha,N}$,
the same lower bound holds for $F_{\alpha,N}$, i.e.,
\[
\dimH(F_{\alpha,N}) \ge \frac12+\frac{1}{2\log(\alpha+2)} .
\]
This completes the proof of Theorem~\ref{lowerbound:Good2N}.
\end{proof}

\begin{corollary}[Proof of Theorem \ref{thm:Good2N}]
The combination of Theorem~\ref{upperbound:Good2N} and Theorem~\ref{lowerbound:Good2N} 
yields exactly Theorem~\ref{thm:Good2N}.
\end{corollary}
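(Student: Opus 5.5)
The corollary is a bookkeeping step, and the plan is simply to place the two one-sided estimates side by side under the common hypothesis $\log(\alpha-1) > \exp(1+N)$. Theorem~\ref{upperbound:Good2N} gives the right-hand inequality in \eqref{eq:main-ineq}. Theorem~\ref{lowerbound:Good2N}, which was proved under exactly the same hypothesis, gives the left-hand inequality. Since both theorems are stated for the same range of $\alpha$ and for every $N \ge 1$, concatenating them yields \eqref{eq:main-ineq} verbatim. One small check is needed: the lower bound must not exceed the upper bound, otherwise the statement would be vacuous. This holds because $\frac{1}{\log(\alpha+2)} < \frac{1}{\log(\alpha-1)} < \frac{\log\log(\alpha-1)}{\log(\alpha-1)}$ whenever $\log\log(\alpha-1) > 1$, and that is implied by $\log(\alpha-1) > e^{1+N} \ge e^2$.

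For the consequences, I would let $\alpha \to \infty$ in \eqref{eq:main-ineq}. Since $\frac{\log\log(\alpha-1)}{\log(\alpha-1)} \to 0$ and $\frac{1}{\log(\alpha+2)} \to 0$, the squeeze principle gives $\lim_{\alpha\to\infty}\dimH(F_{\alpha,N}) = \frac12$. The lower bound is strictly larger than $\frac12$, so the convergence is from above. Both error terms are of logarithmic order, namely $1/\log\alpha$ and $\log\log\alpha/\log\alpha$, which is the claimed logarithmic rate. Neither explicit bound contains $N$, and $N$ enters only through the threshold $\alpha > 1 + \exp(\exp(1+N))$. This justifies the phrase ``for sufficiently large $\alpha$ (depending on $N$) the bounds become independent of $N$''.

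The only point that needs care is checking that the hypotheses used inside the two component proofs are genuinely covered by $\log(\alpha-1) > \exp(1+N)$. For the upper bound, the proof also used $\alpha \ge N+2$ to solve \eqref{5.6}, and this follows trivially from the double-exponential threshold. For the lower bound, the proof required $t\log\alpha$ to be bounded and $-\log t > 1+N$, and both were derived from the same hypothesis. I expect no real obstacle beyond confirming these compatibilities. The substantive difficulty lies in the two component theorems, which are taken as given here.
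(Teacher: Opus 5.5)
Your proposal is correct and is essentially the paper's own argument: the paper gives nothing beyond placing Theorems~\ref{upperbound:Good2N} and~\ref{lowerbound:Good2N} side by side under the common hypothesis $\log(\alpha-1)>\exp(1+N)$, and your squeeze argument for the limit and your remarks on $N$-independence are correct additions. One caution about your audit of the component proofs: the facts $-\log t>1+N$ and $t\log\alpha<1$ do not by themselves close Step~5 of the lower-bound proof when $N=1$ (at $\log(\alpha-1)=e^{2}$ the left-hand side of the logarithmic form of \eqref{eq:final-cond-lower} is roughly $-0.32$), so that compatibility is not automatic; this is a defect in the proof of the component theorem, not in the corollary, which is entitled to take that theorem as given.
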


\section{Open Problems}

The results established in this paper naturally lead to several intriguing open problems and directions for future research.

The sets studied here are defined by global restrictions on all digits. A more refined analysis considers the growth rate of digits.

\begin{enumerate}
\item \textbf{Multifractal Analysis of Growth Rates:} Let $ \psi: \mathbb{N} \to \mathbb{R}^+ $ be a positive function. Define the set
\[
F_N(\psi) = \left\{ x \in (0,1)\setminus\mathbb{Q} : \lim_{n\to\infty} \frac{\varepsilon_n(x)}{\psi(n)} = 1 \right\}.
\]
Compute the Hausdorff dimension of $ F_N(\psi) $ for various growth functions $ \psi(n) $ (e.g., polynomial $ n^c $, exponential $ e^{cn} $). This is the multifractal spectrum of the digit growth rate.
{Related results on growth rates in Gauss-like systems and infinite iterated function systems can be found in \cite{J12, L22, STZ26, Zha20, Zha25}}.

\item \textbf{Exceptional Sets in Approximation:}
Let $W_N(\tau)$ be the set of numbers $x \in (0,1)$ for which the inequality
\[
\left|x - \frac{p_n}{q_n}\right| < q_n^{-\tau}
\]
has infinitely many solutions in the convergents of the $N$-expansion. Determine $\dim_H(W_N(\tau))$ for $\tau > 2$.
{
The case $\tau = 2$ is critical because for any irrational $x$, we have $|x - p_n/q_n| < q_n^{-2}$ for infinitely many $n$, so $W_N(2) = (0,1)\setminus\mathbb{Q}$ has full dimension 1. For $\tau > 2$, the condition becomes restrictive and the corresponding exceptional sets have dimension strictly less than 1. The sets $F_N$ and $F_{\alpha,N}$ studied in Theorem~\ref{thm:Good2N} are related to the case $\tau = 2$: if digits tend to infinity, the approximation quality improves, potentially allowing larger $\tau$. Quantifying this relationship is a natural open problem.}
{
For regular continued fractions ($N=1$), results on such approximation sets have been obtained in \cite{Philipp1970, TanZhou2024, WangWuXu2015}. Extending these to $N$-expansions presents new challenges due to the parameter $N$ in the recurrence and the resulting growth estimates for denominators.}

\item \textbf{Dynamical Systems Perspective:} The $N$-expansion map $T_N$ generates a dynamical system on $[0,1]$ with an invariant measure that has been studied in \cite{Lascu2016, Lascu2017, SebeLascu2020}. A natural direction is to explore the thermodynamic formalism for this system and its applications to dimension theory. Specifically, can one compute the Hausdorff dimension of sets defined by digit restrictions using Bowen's formula or related techniques from smooth dynamical systems?

{For general background on thermodynamic formalism and dimension theory, we refer to \cite{Falconer2004, MauldinUrbanski2003}. For applications to continued fractions and related systems, see \cite{Hensley2006, Iommi2010, Mayer1991, PollicottWeiss2009}. Extending these methods to $N$-expansions would provide a unified framework for understanding the dimension results obtained in this paper and potentially yield sharper bounds.}

\end{enumerate}

\end{document}